\pgfplotsset{compat=1.18}
\pgfplotsset{%
    layers/standard/.define layer set={%
        background,axis background,axis grid,axis ticks,axis lines,axis tick labels,pre main,main,axis descriptions,axis foreground%
    }{
        grid style={/pgfplots/on layer=axis grid},%
        tick style={/pgfplots/on layer=axis ticks},%
        axis line style={/pgfplots/on layer=axis lines},%
        label style={/pgfplots/on layer=axis descriptions},%
        legend style={/pgfplots/on layer=axis descriptions},%
        title style={/pgfplots/on layer=axis descriptions},%
        colorbar style={/pgfplots/on layer=axis descriptions},%
        ticklabel style={/pgfplots/on layer=axis tick labels},%
        axis background@ style={/pgfplots/on layer=axis background},%
        3d box foreground style={/pgfplots/on layer=axis foreground},%
    },
}
\colorlet{RefColor}{green!50!black}
\colorlet{LinkColor}{red!50!black}
\newtheorem{proposition}{Proposition}
\newtheorem{corollary}{Corollary}
\newtheorem{remark}{Remark}
\colorlet{texcscolor}{blue!50!black}
\colorlet{texemcolor}{red!70!black}
\colorlet{texpreamble}{red!70!black}
\colorlet{codebackground}{black!25!white!25}
\DeclareOldFontCommand{\sc}{\normalfont\scshape}{\@nomath\sc}
\colorlet{header1}{blue!10!black}
\newcommand\funding[1]{\protect\\ \hspace*{1.8em}{\color{header1}\bfseries Funding:} #1}
\newcommand{\email}[1]{\protect\href{mailto:#1}{#1}}
\newcommand\keywordsname{Key words}
\newcommand\keywordname{Key word}
\newcommand\MSCcodesname{MSC codes}
\newcommand\MSCcodename{MSC code}
\newenvironment{@abssec}[1]{%
     \if@twocolumn
       \section*{#1}%
     \else
       \vspace{.05in}\footnotesize
       \parindent .2in
         \@hangfrom{\color{header1}\bfseries #1. }\ignorespaces 
     \fi}
     {\if@twocolumn\else\par\vspace{.1in}\fi}
\newenvironment{keywords}{\begin{@abssec}{\keywordsname}}{\end{@abssec}}
\newenvironment{AMS}{\begin{@abssec}{MSC codes}}{\end{@abssec}}
\lstdefinestyle{siamlatex}{%
  style=tcblatex,
  texcsstyle=*\color{texcscolor},
  texcsstyle=[2]\color{texemcolor},
  keywordstyle=[2]\color{texemcolor},
  moretexcs={cref,Cref,maketitle,mathcal,text,headers,email,url},
}
\DeclareTotalTCBox{\code}{ v O{} }
{ %
  fontupper=\ttfamily\color{black},
  nobeforeafter,
  tcbox raise base,
  colback=codebackground,colframe=white,
  top=0pt,bottom=0pt,left=0mm,right=0mm,
  leftrule=0pt,rightrule=0pt,toprule=0mm,bottomrule=0mm,
  boxsep=0.5mm,
  #2}{#1}
\patchcmd\newpage{\vfil}{}{}{}
\title{Empirical sparse regression on quadratic manifolds\thanks{\funding{This material is based upon work supported by the U.S.~Department of Energy, Office of Science Energy Earthshot Initiative as part of the project "Learning reduced models under extreme data conditions for design and rapid decision-making in complex systems" under Award \#DE-SC0024721. This work was additionally supported in part by the Air Force Center of Excellence on Multi-Fidelity Modeling of Rocket Combustor Dynamics, award FA9550-17-1-0195. P.S.~was additionally partially supported by the Air Force Office of Scientific Research (AFOSR) award FA9550-21-1-0222 (Dr.~Fariba Fahroo).}}}
\author{Paul Schwerdtner\thanks{Corresponding author. Courant Institute of Mathematical Sciences, New York University, New York, NY 10012, USA;  \email{paul.schwerdtner@nyu.edu}} \and Serkan Gugercin\thanks{Department of Mathematics and Division of
Computational Modeling and Data Analytics, Academy of Data Science,
Virginia Tech, Blacksburg, VA 24061, USA} \and Benjamin Peherstorfer\thanks{Courant Institute of Mathematical Sciences, New York University, New York, NY 10012, USA}}
\let\oldfigure\figure
\let\endoldfigure\endfigure
\renewenvironment{figure}
  {\oldfigure\small}
  {\endoldfigure}
\begin{document}

\maketitle

\begin{abstract}
Approximating field variables and data vectors from sparse samples is a key challenge in computational science. Widely used methods such as gappy proper orthogonal decomposition and empirical interpolation  rely on linear approximation spaces, limiting their effectiveness for data representing transport-dominated and wave-like dynamics. To address this limitation, we introduce quadratic manifold sparse regression, which trains quadratic manifolds with a sparse greedy method and computes  approximations on the manifold through novel  nonlinear projections of sparse samples. The nonlinear approximations obtained with quadratic manifold sparse regression achieve orders of magnitude higher accuracies than linear methods on data describing transport-dominated dynamics in numerical experiments. 
\end{abstract}
\begin{keywords}
  empirical interpolation, quadratic manifolds, model reduction, dimensionality reduction
\end{keywords}
\begin{AMS}
    65F55,  %
    62H25, %
    65F30, %
    68T09, %
    65F20, %
    65M22 %
\end{AMS}

\section{Introduction}
Approximating a field variable over a spatial domain using only a few sparse measurements is a fundamental challenge in computational science and engineering. When the field variable is represented as a data vector, this problem becomes approximating the vector from a small subset of the set of all of its components. 
Empirical interpolation and regression methods address this challenge by first constructing a suitable approximation space from training data and then approximating new, unseen data vectors in this space using only a few sparse samples given by the  components of the data vectors that are available. Prominent examples of such methods include gappy proper orthogonal decomposition \cite{EversonS1995Karhunen-Loeve} and empirical interpolation \cite{BarraultMNP2004Empirical,ChaturantabutS2010Nonlinear}.
However, these approaches are constrained by the limited expressivity of linear approximations in vector spaces, which is related to the Kolmogorov barrier \cite{Peherstorfer2022Breaking}. 
To overcome the limitations of linear approximation spaces, we introduce quadratic manifold sparse regression (QMSR) that seeks nonlinear approximations on quadratic manifolds \cite{JainTRR2017quadratic,RutzmoserRTJ2017Generalization,BarnettF2022Quadratic,SchwerdtnerP2024Greedy,BennerGHP-D2023quadratic,GoyalB2024Generalized} from sparse samples of the data vectors.
The proposed QMSR method first constructs a quadratic manifold from training data and then computes approximations via nonlinear projections that correspond to solutions of sparse regression problems. 
Numerical experiments demonstrate that QMSR approximations are accurate even when observing only a small number of components. Furthermore, the experiments show that the increased expressivity of quadratic manifolds is leveraged by QMSR to accurately approximate data sets stemming from transport-dominated and wave-like problems; problems that are out of reach for methods that are based on linear approximation spaces such as empirical interpolation and gappy proper orthogonal decomposition.

Empirical interpolation \cite{BarraultMNP2004Empirical,MadayM2013Generalized,MadayMPY2015Generalized,MadayMT2016Convergence}, its discrete counter part \cite{ChaturantabutS2010Nonlinear,DrmacG2016New,doi:10.1137/140978430,doi:10.1137/15M1015868,DrmacS2018Discrete,Saibaba2020Randomized,ChellappaFB2024Discrete,doi:10.1137/22M1484018}, missing point estimation \cite{AstridWWB2004Missing,AstridWWB2008Missing}, and gappy proper orthogonal decomposition \cite{EversonS1995Karhunen-Loeve} have been extensively studied for constructing data vectors and field variables from sparse samples. There is work on selecting sampling operators \cite{DrmacG2016New,ZimmermannW2016Accelerated,PeherstorferDG2020Stability,BinevCMN2018Greedy}, which is related to sensor placement \cite{ManoharBKB2018Data-Driven,SargsyanBK2015Nonlinear,ManoharBK2017Environment,ArgaudBCGMM2018Sensor}. There is a range of empirical regression techniques that are referred to as hyper-reduction methods \cite{CarlbergFCA2013GNAT,FarhatACC2014Dimensional,FarhatCA2015Structure-preserving}, which focus on preserving energy and other properties of interest in fully discrete finite-element approximations. There is also a line of work that considers structure preservation in discrete empirical interpolation \cite{ChaturantabutBG2016Structure-Preserving,PagliantiniV2023Gradient-preserving}. Missing point estimation \cite{AstridWWB2004Missing,AstridWWB2008Missing} has been applied to reconstruct flow fields in computational fluid dynamics from sparse samples \cite{Willcox2006Unsteady}. 
However, all of these methods seek linear approximations in subspaces, which is affected by the Kolmogorov barrier. 
There is a large body of work that aims to find nonlinear approximations to circumvent the Kolmogorov barrier; we refer to the brief survey \cite{Peherstorfer2022Breaking} for an overview. 
With regards to nonlinear approximations in  empirical interpolation and regression, there is a line of work on adaptive empirical interpolation that adapts the approximation space as new sparse samples are received so that the Kolmogorov barrier can be circumvented \cite{PeherstorferW2015Online,ZimmermannPW2018Geometric,Peherstorfer2020Model,10.1063/5.0169392}; however, the adaptation is meaningful only in settings where there is a time variable or some other variable that monotonically increases such as the iteration counter in an optimization problem; see also \cite{HesthavenPR2022Reduced} for a survey. There are also localized methods \cite{EftangS2012Parameter,AmsallemZF2012Nonlinear,PeherstorferBWB2014Localized} that pre-compute a finite number of approximation spaces and then pick one depending on a criteria for approximating the data vector at hand. 
We build on work that achieves nonlinear approximations via quadratic approximations, which has been used in the context of model reduction first in the works \cite{JainTRR2017quadratic,RutzmoserRTJ2017Generalization}, which have led to a series of works that consider quadratic approximations in non-intrusive settings \cite{GeelenWW2023Operator,GeelenBWW2024Learning,GeelenBW2023Learning,SchwerdtnerP2024Greedy,SHARMA2023116402} and in fully discrete approximations \cite{BarnettF2022Quadratic}.
However, neither of these works aim to approximate data vectors from sparse samples on quadratic manifolds. 

In this work, we derive a sparse encoder function for quadratic manifold approximations that can operate on sparse samples of data vectors rather than on all components.
Building on the sparse encoder function and previous work on the greedy construction of quadratic manifolds \cite{SchwerdtnerP2024Greedy,SchwerdtnerMPBOP2024Online}, we then derive a sparse greedy method for constructing the quadratic manifold so that applying the sparse encoder and subsequently the quadratic decoder function leads to a nonlinear projection onto the quadratic manifold.
We propose to select the sparse sampling points via the QDEIM selection operator \cite{DrmacG2016New} and show that under mild conditions the QMSR approximations exactly recover the points that lie on the quadratic manifold from sparse samples. This is a similar property as the one of empirical interpolation and regression that ensures that vectors in the approximation space are exactly recovered from sparse samples under mild conditions. 
Numerical experiments demonstrate that QMSR approximations achieve errors that are comparable to the errors achieved by reconstructing the full data points on quadratic manifolds.

This manuscript is organized as follows. We first provide preliminaries in Section~\ref{sec:Prelim} and then present the QMSR method in Section~\ref{sec:sparseGQM}. Numerical experiments are discussed in Section~\ref{sec:NumExp}. Conclusions are drawn in Section~\ref{sec:Conc}.

\section{Preliminaries and problem formulation}\label{sec:Prelim}
This section briefly recapitulates dimensionality reduction with quadratic manifolds \cite{JainTRR2017quadratic,GeelenWW2023Operator,BarnettF2022Quadratic} and a greedy method \cite{SchwerdtnerP2024Greedy} for constructing them.

\subsection{Data reduction with encoder and decoder functions}
Consider the high-dimensional data vectors $\fullstatei{1}, \dots, \fullstatei{\nsnapshots} \in \R^{\nfull}$ that are collected in a data matrix $\snapshots = [\fullstatei{1}, \dots, \fullstatei{\nsnapshots}] \in \R^{\nfull \times \nsnapshots}$. Notice that a data vector can correspond to a field variable that is evaluated at $\nfull$ grid points or an unstructured cloud of $\nfull$ points in the spatial domain. 
Now let $\encoder: \R^{\nfull} \to \R^{\nred}$ be an encoder function that maps a high-dimensional data point $\fullstate \in \mathbb{R}^{\nfull}$ onto a reduced point $\redstate = \encoder(\fullstate)$ in $\mathbb{R}^{\nred}$ with $\nred \ll \nfull$. With a decoder function $\decoder: \R^{\nred} \to \R^{\nfull}$, a reduced data point $\redstate$ can be lifted back into $\mathbb{R}^{\nfull}$.
The reconstruction error with respect to the Euclidean norm $\|\cdot\|_2$ is 
    \begin{align}
      \label{eq:reconstruction_error}
      e(\fullstate) = \|\decoder(\encoder(\fullstate))- \fullstate\|_2^2\,.
    \end{align}
The linear encoder and decoder functions for dimension $\nred$ that minimize the average reconstruction error \eqref{eq:reconstruction_error} of the data $\fullstatei{1}, \dots, \fullstatei{\nsnapshots}$ are given by the first $\nred$ left-singular vectors of the data matrix $\snapshots$: Let  $\leftsings \singvals \rightsings^\top=\snapshots$ be the reduced singular value decomposition of $\snapshots$ 
where $\leftsings \in \mathbb{R}^{\nfull \times r_{\text{max}}}$ is the matrix of left singular vectors, $\singvals \in \mathbb{R}^{r_{\text{max}} \times r_{\text{max}}}$ is the diagonal matrix of singular values $\sigma_1 \geq \cdots \geq \sigma_{r_{\text{max}}} > 0$, and $\rightsings \in \mathbb{R}^{k \times r_{\text{max}}}$ is the matrix of right singular vectors.
Let now  $\Vlin = [\bfphi_1, \dots, \bfphi_{\nred}] \in \mathbb{R}^{\nfull \times \nred}$ contain as columns the first $\nred \leq r_{\text{max}}$ left-singular vectors corresponding to the largest singular values of $\snapshots$. 
The linear encoder and decoder functions that minimize the averaged reconstruction error over the data $\fullstatei{1}, \dots, \fullstatei{\nsnapshots}$ are $\encoder_{\Vlin}(\fullstate)=\Vlin^\top \fullstate$ and $\decoder_{\Vlin}(\redstate)=\Vlin \redstate$, respectively.

\subsection{Decoder functions for approximations on quadratic manifolds}
Approximations on quadratic manifolds as used in \cite{GeelenBW2023Learning, GeelenBWW2024Learning, GeelenWW2023Operator, BarnettF2022Quadratic}, that follow the early work of~\cite{JainTRR2017quadratic, RutzmoserRTJ2017Generalization}, can achieve lower reconstruction errors than linear approximations given by linear encoder and decoder functions. For this, the decoder function is augmented with an additive nonlinear correction term  
  \[
  \decoder_{\Vlin, \Vnonlin}(\redstate) = \Vlin \redstate + \Vnonlin \featuremap(\redstate)\,,
  \]
where $\featuremap: \R^{\nred} \to \R^{\nredmod}$ is a nonlinear feature map onto a $\nredmod$-dimensional feature space $\mathbb{R}^{\nredmod}$, $\Vnonlin \in \mathbb{R}^{\nfull \times \nredmod}$ is a weighting matrix, and $\Vlin \in \mathbb{R}^{\nfull \times \nred}$ is a basis matrix with orthonormal columns. 
The correction term via the nonlinear feature map $h$ allows the decoder $\decoder_{\Vlin,\Vnonlin}$ to reach the points in the set
\begin{equation}\label{eq:Prelim:ManifoldDef}
      \mathcal{M}_{\nred}(\Vlin, \Vnonlin)=\{ \Vlin \redstate + \Vnonlin \featuremap(\redstate) \,|\, \redstate \in \R^{\nred} \} \subset \R^{\nfull}\,,
    \end{equation}
    which can contain points outside of the $\nred$-dimensional subspace spanned by the columns of $\Vlin$.
In this work, we only consider quadratic feature maps 
    \begin{align}
      \begin{split}
      \label{eq:condensed_kronecker}
      \featuremap&: \R^{\nred} \to \R^{\nred(\nred+1)/2},\\ \redstate &\mapsto \begin{bmatrix}
        \redveci{1} \redveci{1} & \redveci{1} \redveci{2} & \cdots & \redveci{1} \redveci{\nred} & \redveci{2} \redveci{2} &\cdots& \redveci{r} \redveci{r}
      \end{bmatrix}^\top\,,
      \end{split}
    \end{align}
with $\redstate = [\redveci{1}, \dots, \redveci{r}]^T$. 
We follow the convention in \cite{JainTRR2017quadratic, GeelenBW2023Learning, GeelenBWW2024Learning, GeelenWW2023Operator, BarnettF2022Quadratic,SchwerdtnerP2024Greedy,SchwerdtnerMPBOP2024Online} and refer to sets $\mathcal{M}_r(\Vlin, \Vnonlin)$ induced by a quadratic $h$ given in \eqref{eq:condensed_kronecker} as quadratic manifold, even though the set $\mathcal{M}_r(\Vlin, \Vnonlin)$ is not necessarily a manifold.  
We remark that other feature maps than quadratic ones can be used such as maps that are trained on data as in \cite{BarnettFM2023Neural-network-augmented}.

\subsection{Linear and nonlinear encoder functions for quadratic manifold approximations}\label{sec:Prelim:LinNonlinDecoder}
Additionally to the decoder function $g_{\Vlin, \Vnonlin}$, we need to devise a strategy  to approximate or encode a high-dimensional data point $\fullstate \in \mathbb{R}^{\nfull}$ on the manifold $\mathcal{M}_r(\Vlin, \Vnonlin)$. 
An approximation $\hat{\fullstate}^* \in \mathcal{M}_{\nred}$ with minimal error is given as a solution of the optimization problem
\begin{equation}\label{eq:Prelim:QMasRegOnM}
\hat{\fullstate}^* \in \operatorname*{arg\,min}_{\hat{\fullstate} \in \mathcal{M}_{\nred}} \|\hat{\fullstate} - \fullstate\|_2^2\,.
\end{equation}
For every solution $\hat{\fullstate}^*$ of \eqref{eq:Prelim:QMasRegOnM}, there exists an $\redstate^* \in \mathbb{R}^{\nred}$ that solves 
\begin{equation}\label{eq:Prelim:QMasReg}
\redstate^* \in \operatorname*{arg\,min}_{\redstate \in \mathbb{R}^{\nred}} \|g_{\Vlin,\Vnonlin}(\redstate) - \fullstate\|_2^2
\end{equation} and vice versa, which follows from the definition of the set $\mathcal{M}_r(\Vlin,\Vnonlin)$ given in \eqref{eq:Prelim:ManifoldDef}, since every point $\hat{\fullstate} \in \mathcal{M}_r$ can be written as $g_{\Vlin, \Vnonlin}(\redstate)$ with an appropriate $\redstate$. 
Even though the problem \eqref{eq:Prelim:QMasRegOnM} and the equivalent problem \eqref{eq:Prelim:QMasReg} do not necessarily have a unique solution, we interpret $\hat{\fullstate}^* = g_{\Vlin,\Vnonlin}(\redstate^*)$ as a nonlinear projection of $\fullstate$ onto the set $\mathcal{M}_{\nred}$ with respect to metric induced by  the norm $\|\cdot\|_2$.

Deriving a nonlinear projection with respect to the $\|\cdot\|_2$ norm can be computationally demanding because the nonlinear regression problem \eqref{eq:Prelim:QMasRegOnM} or equivalently \eqref{eq:Prelim:QMasReg} has to be solved.
Instead, a common choice \cite{GeelenBWW2024Learning,BarnettF2022Quadratic,SchwerdtnerP2024Greedy} is
linearizing $\decoder_{\Vlin,\Vnonlin}$ about the point $\boldsymbol 0_r \in \R^{\nred}$ for finding the encoding, which leads to the optimization problem
\begin{equation}\label{eq:Prelim:LinEncoderLSQ}
\redstate^* = \operatorname*{arg\,\min}_{\redstate \in \mathbb{R}^{\nred}} \|\bar{g}_{\Vlin,\Vnonlin}(\redstate) - \fullstate\|_2^2\,,
\end{equation}
with the linearization $\bar{\decoder}_{\Vlin,\Vnonlin}(\redstate) = \Vlin\redstate$ 
and the unique solution $\redstate^* = \Vlin^{\top}\fullstate$.
The least-squares problem \eqref{eq:Prelim:LinEncoderLSQ} motivates using the linear encoder function given by $f_{\Vlin}(\fullstate) = \Vlin^{\top}\fullstate$, where $\Vlin$ is the same matrix with orthonormal columns that is used in the decoder $g_{\Vlin, \Vnonlin}$. 
The linear encoder function $f_{\Vlin}$  is used in \cite{GeelenBWW2024Learning,BarnettF2022Quadratic,SchwerdtnerP2024Greedy} and we will use it in the following as well.
\subsection{Greedy method for constructing quadratic manifolds}
Given a data matrix $\snapshots$ and a feature map $h$, there exist different methods for constructing a basis $\Vlin$ with orthonormal columns and a weight matrix $\Vnonlin$, which induces the linear encoder $f(\fullstate) = \Vlin^{\top}\fullstate$ and the quadratic decoder function $g(\redstate) = \Vlin\redstate + \Vnonlin h(\redstate)$ corresponding to the manifold \eqref{eq:Prelim:ManifoldDef}. In~\cite{GeelenWW2023Operator}, the authors propose to select $\Vlin$ as the first $\nred$ left-singular vectors of $\snapshots$, while~\cite{GeelenBW2023Learning} introduce an elaborate alternating minimization approach. In this work, we build on~\cite{SchwerdtnerP2024Greedy}, which introduces a greedy method for choosing a basis from the set of the leading $\nconsider > \nred$ left-singular vectors, which is empirically shown to lead to a higher accuracy than just selecting the first $\nred$ and which is computationally less taxing than the alternating minimization of~\cite{GeelenBW2023Learning}.
The greedy method in~\cite{SchwerdtnerP2024Greedy} selects $\nred$ left-singular vectors $\leftsing{j_1}, \dots, \leftsing{j_{\nred}}$ with indices $j_1, j_2, \dots, j_{\nred} \in \{1, \dots, \nconsider\}$  from the first $\nconsider \gg \nred$ left-singular vectors $\leftsing{1}, \dots, \leftsing{\nconsider}$. The selected left-singular vectors form the columns of the matrix $\Vlin = [\leftsing{j_1}, \dots, \leftsing{j_{\nred}}]$.
Then, the weight matrix $\Vnonlin$ is fitted by solving the linear least-squares problem
    \begin{align}
      \label{eq:qm_lstsq_problem}
      \min\limits_{\Vnonlin \in \R^{\nfull \times \nredmod}} \frobsq{(\bfI - \Vlin \Vlin^\top) \snapshots + \Vnonlin \featuremap(\Vlin^\top \snapshots)} + \gamma \frobsq{\Vnonlin},
    \end{align}
    where $\gamma > 0$ is a regularization parameter and $\frob{\cdot}$ denotes the Frobenius norm of its matrix argument.

\section{Sparse regression on quadratic manifolds}\label{sec:sparseGQM}
We introduce the quadratic manifold sparse regression (QMSR) method for constructing approximations $\hat{\fullstate} \in \mathbb{R}^{\nfull}$ from $\nrows \ll \nfull$ components $s_{i_1}, \dots, s_{i_{\nrows}}$ with indices $i_1, \dots, i_{\nrows}$ of a data vector $\fullstate = [s_1, \dots, s_{\nfull}]^{\top} \in \mathbb{R}^{\nfull}$.
We first formulate a nonlinear sparse regression problem on quadratic manifolds, which leads to a sparse linear encoder function. 
We then propose a sparse greedy method for training the sparse linear encoder and the quadratic decoder function of a quadratic manifold. For the training we have available all components of training data points $\fullstatei{1}, \dots, \fullstatei{k}$. 
The sparse encoder function and the decoder function corresponding to the quadratic manifold can then be used to derive approximations $\hat{\fullstate}$ from sparse samples $s_{i_1}, \dots, s_{i_{\nrows}}$ of a data point $\fullstate$.

\subsection{Approximating data points on quadratic manifolds from sparse samples}\label{sec:QMSR:SparseEncoder}
Let $\subsampler: \R^{\nfull} \to \R^{\nrows}$ denote a sampling operator that selects $\nrows$ components corresponding to the indices $i_1, \dots, i_{\nrows} \in \{1, \dots, \nfull\}$ out of the $\nfull$ components of a point $\fullstate = [s_1, \dots, s_{\nfull}]^{\top} \in \mathbb{R}^{\nfull}$,
\[
\subsampler \fullstate = \begin{bmatrix}
s_{i_1}\\
\vdots\\
s_{i_{\nrows}}
\end{bmatrix} \in \mathbb{R}^{\nrows}.
\]
We refer to $s_{i_1}, \dots, s_{i_{\nrows}}$ as sparse samples of $\fullstate$ because we are typically interested in cases with $\nrows \ll \nfull$. Consider now a quadratic decoder function $\decoder_{\Vlin,\Vnonlin}(\redstate) = \Vlin\redstate + \Vnonlin h(\redstate)$. 
Analogously to the case where all components of the data point $\fullstate$ are available as discussed in Section~\ref{sec:Prelim:LinNonlinDecoder}, we can find an encoding $\redstate^*$ with the lowest error via the nonlinear regression problem
\begin{equation}\label{eq:ERQM:OptiProb}
\redstate^* \in \operatorname*{arg\,min}_{\redstate \in \mathbb{R}^{\nred}} \|\subsampler (\decoder_{\Vlin,\Vnonlin}(\redstate) - \fullstate)\|_2^2\,,
\end{equation}
which uses only the available components $\subsampler \fullstate$ to find an $\redstate^*$.
To avoid having to solve a nonlinear least-squares problem, we mimic the approach of Section~\ref{sec:Prelim:LinNonlinDecoder} and use the linearized decoder function $\bar{g}_{\Vlin, \Vnonlin}$ to obtain the optimization problem
\begin{equation}\label{eq:QMSR:LinEncodeLSQ}
\redstate^* = \operatorname*{arg\,min}_{\redstate \in \mathbb{R}^{\nred}} \|\subsampler (\bar{g}_{\Vlin,\Vnonlin}(\redstate) - \fullstate)\|_2^2\,,
\end{equation}
which has the unique solution $\redstate^* = (\subsampler \Vlin)^{+}\subsampler \fullstate$ if $\subsampler \Vlin$ has full rank, where $\Vlin^{+}$ denotes the Moore-Penrose pseudo-inverse of $\Vlin$.
Analogously to the case when all components of $\fullstate$ are available, the optimization problem \eqref{eq:QMSR:LinEncodeLSQ} motivates the sparse linear encoder function
\[
\subencoder: \mathbb{R}^{\nrows} \to \mathbb{R}^{\nred}\,,\qquad \subencoder(\subsampler \fullstate) = (\subsampler \Vlin)^{+}\subsampler \fullstate\,.
\]
Critically, the sparse encoder function $\subencoder$ only requires the $\nrows$ components $\subsampler\fullstate$ of $\fullstate$ corresponding to the sampling operator $\subsampler$.
Even though we will focus on the encoder function $\subencoder$, we  will also compare to the encoding obtained by numerically solving \eqref{eq:ERQM:OptiProb} in the computational experiments in Section~\ref{sec:NumExp}. 

In summary, given sparse samples $s_{i_1}, \dots, s_{i_{\nrows}}$ of a point $\fullstate \in \mathbb{R}^{\nfull}$, the QMSR approximation is $\hat{\fullstate} = g_{\Vlin,\Vnonlin}(f_{\Vlin,\subsampler}(\subsampler \fullstate))$. In the following sections, we discuss a sparse greedy method to construct $\Vlin$ and $\Vnonlin$ as well as the sampling operator $\subsampler$. 

\subsection{Greedy construction of linear encoder and quadratic decoder for sparse data}\label{sec:QMSR:SparseGreedy}
We now propose a sparse greedy method for constructing $\Vlin$ and $\Vnonlin$ for quadratic manifolds with sparse encoder $\subencoder$ and quadratic decoder $g_{\Vlin,\Vnonlin}$ that mimics the greedy method introduced in \cite{SchwerdtnerP2024Greedy} for full data. 
Given are a sampling operator $\subsampler$ and training data $\fullstatei{1}, \dots, \fullstatei{k}$, of which all components are available. 

Let $f_{[\Vlin, \testvec], \subsampler}$ be the sparse encoder function corresponding to the matrix $[\Vlin, \testvec]$ that contains as  columns the columns of $\Vlin$ and as last column the vector $\testvec$. The columns of $[\Vlin, \testvec]$ are orthonormal. We define the objective function
    \begin{align}
      \label{eq:ourmethod:objective}
\columnobjective(\testvec,\Vlin,\Vnonlin)=\frobsq{
        [\Vlin, \testvec]f_{[\Vlin,\testvec], \subsampler}(\subsampler \snapshots) + \Vnonlin \featuremap\left(f_{[\Vlin,\testvec], \subsampler}(\subsampler \snapshots)\right)-\snapshots
      } + \gamma \frobsq{\Vnonlin},
    \end{align}
  where we overload the notation of the encoder function $f_{[\Vlin,\testvec], \subsampler}$ to operate column-wise on a matrix rather than only on a vector. 
The term $\gamma\frobsq{\Vnonlin}$ acts as a regularizer on the coefficient matrix $\Vnonlin$, with regularization parameter $\gamma > 0$. 
Notice that the objective function $\columnobjective$ is analogous to the objective function used in the greedy method introduced in \cite{SchwerdtnerP2024Greedy} except that our objective  $\columnobjective$ uses the sparse encoder function. 

Recall that $\leftsing{1}, \dots \leftsing{\nconsider}$ denotes the first $\nconsider$ left-singular vector of the data matrix $\snapshots$ ordered descending with respect to the singular values $\singval{1} \ge \singval{2} \ge \dots \ge \singval{\nconsider}$. 
We greedily select $\nred$ left-singular vectors $\leftsing{j_1}, \dots, \leftsing{j_\nred}$ from the first $\nconsider$ left-singular vectors  with the objective~\eqref{eq:ourmethod:objective}, i.e.,\ in each greedy iteration we determine a minimizer
    \begin{align}
      \label{eq:ourmethod:minimizationprob}
      \min\limits_{j_i=1, \dots,\nconsider} \min\limits_{\Vnonlin\in\R^{\nfull \times \nredmod}} \columnobjective(\leftsing{j_i}, \Vlin_{i-1}, \Vnonlin),
    \end{align}
    where $\Vlin_{i-1} = [\leftsing{j_1}, \dots, \leftsing{j_{i-1}}]$ denotes the matrix constructed during the previous $i - 1$ iterations. 
We initialize $\Vlin_0 \in \R^{\nfull \times 0}$ as an empty matrix.
When $\nred$ indices $j_1, \dots, j_{\nred}$ have been selected, we obtain $\Vlin = [\leftsing{j_1}, \dots, \leftsing{j_{\nred}}] \in \mathbb{R}^{\nfull \times \nred}$ and thus the sparse encoder function $\subencoder$. We then  compute $\Vnonlin$ as the minimizer of
\begin{align}
  \label{eq:ourmethod:finalcoeffcomput}
\min\limits_{\Vnonlin\in\R^{\nfull \times \nredmod}}\frobsq{
    \Vlin\subencoder(\subsampler\snapshots) + \Vnonlin \featuremap\left(\subencoder( \subsampler \snapshots)\right)-\snapshots
  } + \gamma \frobsq{\Vnonlin},
\end{align}
to obtain the quadratic decoder $\decoder_{\Vlin,\Vnonlin}(\redstate)=\Vlin \redstate + \Vnonlin \featuremap(\redstate)$.

\begin{remark}We note it can be avoided to evaluate the objective function~\eqref{eq:ourmethod:objective} when determining the column indices during the greedy iterations. In fact, in our implementation, during the greedy iterations, we determine a minimizer
\begin{align}
\label{eq:ourmethod:minimization_prob_cheap}
  \min\limits_{j_i=1, \dots,\nconsider} \min\limits_{\widetilde{\Vnonlin}\in\R^{\nredmax \times \nredmod}} \columnobjectivecheap(\leftsing{j_i}, \Vlin_{i-1}, \widetilde{\Vnonlin}),
\end{align}
where
\begin{align*}
\columnobjectivecheap(\leftsing{j_i}, [\leftsing{j_1}, \dots, \leftsing{j_{i-1}}], \widetilde{\Vnonlin})=\hspace{7.5cm}\\
\frobsq{
    \lifter(\leftsings, j_1, \dots, j_{i-1}, j_i)  + \widetilde{\Vnonlin} \featuremap\left(f_{[\leftsing{j_1}, \dots, \leftsing{j_{i-1}}, \leftsing{j_i}], \subsampler}(\subsampler \snapshots)\right)-\singvals \rightsings^\top
  } + \gamma \frobsq{\widetilde{\Vnonlin}}.
\end{align*}
The $j_\ell$-th row of the matrix $\lifter(\leftsings, j_1, \dots, j_i) \in \R^{\nredmax \times \nsnapshots}$
contains the $\ell$-th row of $f_{[\leftsing{j_1}, \dots, \leftsing{j_{i-1}}, \leftsing{j_i}], \subsampler}(\subsampler \snapshots)$ for $\ell \in \{1,\dots,i\}$ and all other rows of $\lifter(\leftsings, j_1, \dots, j_i)$ are zero. Note that the minimization problems~\eqref{eq:ourmethod:minimizationprob} and~\eqref{eq:ourmethod:minimization_prob_cheap} have the same solutions, since $\frob{\mathbf{A}\mathbf{Z}}=\frob{\mathbf{A}}$, for any $\mathbf{A}\in \R^{n \times m}$ and $\mathbf{Z} \in \R^{m \times p}$, when $\mathbf{Z}$ has orthonormal columns and $p\ge m$. This allows us to solve the least-squares problem with fewer unknowns given by $\columnobjectivecheap$ during the greedy iterations and only solve~\eqref{eq:ourmethod:finalcoeffcomput} once when all columns of $\Vlin$ are determined.
\end{remark}

\subsection{Properties of sparse regression on quadratic manifolds}
We now show two properties of approximations obtained with the QMSR method. We first show that the QMSR approximation $\hat{\fullstate}$ of a point $\fullstate$ is optimal with respect to a least-squares regression objective, which motivates the term ``regression'' in QMSR. We start by proving that applying QMSR is an idempotent map. 

\begin{proposition}
\label{prop:idempotent}
Consider a basis matrix $\Vlin \in \mathbb{R}^{\nfull \times \nred}$ and selection operator $\subsampler: \mathbb{R}^{\nfull} \to \mathbb{R}^{\nrows}$. For any $\fullstate \in \mathbb{R}^{\nfull}$, the composition $\recmap: \R^{\nfull} \to \R^{\nfull}, \fullstate \mapsto \decoder(\subencoder(\fullstate))$ is an idempotent map onto $\mathcal{M}_r(\Vlin, \Vnonlin)$, if $\Vnonlin$ is constructed as a minimizer of~\eqref{eq:ourmethod:finalcoeffcomput}.
\end{proposition}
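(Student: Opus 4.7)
The plan is to reduce the proposition to one key algebraic identity, namely that any minimizer $\Vnonlin$ of the ridge regression~\eqref{eq:ourmethod:finalcoeffcomput} satisfies
\[
(\subsampler \Vlin)^{+} \subsampler \Vnonlin = \mathbf{0}\,.
\]
This is a Galerkin-type orthogonality that drops out of the normal equations and is the only place where the assumption that $\Vnonlin$ solves~\eqref{eq:ourmethod:finalcoeffcomput} is actually used. I will take as implicit throughout that $\subsampler \Vlin$ has full column rank (otherwise $\subencoder$ is not well-defined to begin with) and read the expression $\subencoder(\fullstate)$ in the proposition as $\subencoder(\subsampler \fullstate) = (\subsampler \Vlin)^{+} \subsampler \fullstate$.

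First I would derive the identity. Differentiating the strictly convex objective in~\eqref{eq:ourmethod:finalcoeffcomput} with respect to $\Vnonlin$ and writing $F = \subencoder(\subsampler \snapshots) = (\subsampler \Vlin)^{+} \subsampler \snapshots$, $H = \featuremap(F)$, the unique minimizer satisfies the normal equation
\[
\Vnonlin \bigl( H H^{\top} + \gamma \bfI \bigr) = (\snapshots - \Vlin F) H^{\top}\,.
\]
Left-multiplying by $(\subsampler \Vlin)^{+} \subsampler$ collapses the right-hand side to zero, because $(\subsampler \Vlin)^{+} \subsampler \snapshots = F$ and $(\subsampler \Vlin)^{+} \subsampler \Vlin F = F$. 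Since $H H^{\top} + \gamma \bfI$ is invertible for $\gamma > 0$, the identity follows.

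With the identity in hand, idempotency is a one-line calculation. For arbitrary $\fullstate \in \R^{\nfull}$, let $\redstate^{\ast} = (\subsampler \Vlin)^{+} \subsampler \fullstate$ so that $\recmap(\fullstate) = \Vlin \redstate^{\ast} + \Vnonlin \featuremap(\redstate^{\ast})$. Sampling this and applying $(\subsampler \Vlin)^{+}$ gives, via $(\subsampler \Vlin)^{+} \subsampler \Vlin = \bfI$ and the orthogonality identity, $(\subsampler \Vlin)^{+} \subsampler \recmap(\fullstate) = \redstate^{\ast}$, hence $\recmap(\recmap(\fullstate)) = \decoder_{\Vlin,\Vnonlin}(\redstate^{\ast}) = \recmap(\fullstate)$. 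The range of $\recmap$ lies in $\mathcal{M}_{\nred}(\Vlin,\Vnonlin)$ by~\eqref{eq:Prelim:ManifoldDef}, and the same calculation shows that any $\Vlin \redstate + \Vnonlin \featuremap(\redstate)$ is a fixed point of $\recmap$, so the range equals $\mathcal{M}_{\nred}(\Vlin,\Vnonlin)$.

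The main obstacle I anticipate is purely bookkeeping: unpacking ``idempotent onto $\mathcal{M}_{\nred}(\Vlin,\Vnonlin)$'' into the two statements $\recmap \circ \recmap = \recmap$ and $\Range(\recmap) = \mathcal{M}_{\nred}(\Vlin,\Vnonlin)$, and keeping track of the full column rank assumption on $\subsampler \Vlin$. It is worth noting that the quadratic structure of $\featuremap$ is never used anywhere in the argument; the same proof goes through verbatim for any nonlinear feature map, which may be worth recording as a remark after the proof.
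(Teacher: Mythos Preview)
Your proposal is correct and follows essentially the same route as the paper: both hinge on the identity $(\subsampler\Vlin)^{+}\subsampler\Vnonlin = \mathbf{0}$, derived from the least-squares optimality of $\Vnonlin$, and then verify idempotency by a direct computation. The only cosmetic difference is that the paper extracts the identity from the closed-form ridge solution $\Vnonlin = (\Vlin(\subsampler\Vlin)^{+}\subsampler - \bfI)\snapshots\bfK$ together with the Moore--Penrose property $A^{+}AA^{+}=A^{+}$, whereas you obtain it directly from the normal equations; your observation that the argument is agnostic to the particular form of $\featuremap$ is also correct.
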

\begin{proof}
Since we denote $\redstate=\subencoder(\fullstate)=\pvp \subsampler \fullstate$, we have that $\recmap(\fullstate)=\Vlin \redstate + \Vnonlin\featuremap(\redstate)$. Therefore,
\begin{align*}
    \subencoder(\recmap(\fullstate))&=\pvp \subsampler(\Vlin \redstate+\Vnonlin\featuremap(\redstate))=\pvp \subsampler\Vlin \redstate + \pvp \subsampler \Vnonlin \featuremap(\redstate)\\
    &=\underbrace{\pvp \subsampler \Vlin \pvp}_{\pvp} \subsampler \fullstate + \pvp \subsampler \Vnonlin \featuremap(\redstate)\\
    &=\redstate + \pvp \subsampler \Vnonlin \featuremap(\redstate).
\end{align*}
Next we show that $\pvp \subsampler \Vnonlin \featuremap(\redstate)=\boldsymbol{0}\in\R^{\nred}$. For this, note that since we select $\Vnonlin$ as a minimizer of~\eqref{eq:ourmethod:finalcoeffcomput}, $\Vnonlin$ can be written as
\begin{align}
\label{eq:Vnonlinform}
\Vnonlin=(\Vlin\pvp \subsampler-\bfI)\snapshots\bfK,
\end{align}
where 
\begin{align*}
\bfK=(\featuremap(\redstate)^\top + \gamma \bfI)\left((\featuremap(\redstate)^\top + \gamma \bfI)^\top (\featuremap(\redstate)^\top + \gamma \bfI)\right)^{-1}.
\end{align*} Thus, 
\begin{align}
\label{eq:PVpPW_is_zero}
\begin{split}
\pvp \subsampler \Vnonlin&=(\pvp \subsampler\Vlin \pvp \subsampler-\pvp\subsampler)\snapshots\bfK\\&=(\pvp\subsampler-\pvp\subsampler)\snapshots\bfK=\boldsymbol{0}.
\end{split}
\end{align}
It follows that $\subencoder(\recmap(\fullstate))=\subencoder(\fullstate)$ and thus \begin{align*}
\recmap(\fullstate)=\decoder(\subencoder(\fullstate))=\decoder(\subencoder(\recmap(\fullstate)))=\recmap(\recmap(\fullstate)).
\end{align*}
\end{proof}

Building on Proposition~\ref{prop:idempotent}, we determine the optimization problem that is solved by our QMSR approximation. Notice that we now require that $\subsampler \Vlin$ has full rank and that $\nrows\ge \nred$.

\begin{proposition}
If $\subsampler\Vlin$ has full rank, $\nrows \ge \nred$, and $\Vnonlin$ is obtained with the sparse greedy procedure so that $\Vnonlin$ solves \eqref{eq:ourmethod:finalcoeffcomput}, then, for all $\fullstate \in \mathbb{R}^{\nfull}$, the QMSR approximation
\[
\hat{\fullstate} = \decoder_{\Vlin,\Vnonlin}(f_{\Vlin,\subsampler}(\fullstate))
\]
is the unique solution of
\begin{align}\label{eq:QMSRProp:ProofUniqueSol}
\operatorname*{arg\,min}_{\hat{\fullstate} \in \mathcal{M}_r} \|(\subsampler\Vlin)^{+}\subsampler(\hat{\fullstate} - \fullstate)\|_2^2\,.
\end{align}
\end{proposition}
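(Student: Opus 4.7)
The plan is to reduce the sparse least-squares objective on the manifold to a standard least-squares problem in the $\nred$-dimensional reduced space and then identify the unique minimizer with the QMSR approximation. Write any $\hat{\fullstate}' \in \mathcal{M}_r(\Vlin, \Vnonlin)$ as $\hat{\fullstate}' = \Vlin \redstate' + \Vnonlin \featuremap(\redstate')$ for some $\redstate' \in \R^{\nred}$. Apply $\pvp \subsampler$ to $\hat{\fullstate}' - \fullstate$ and use two ingredients: first, since $\subsampler \Vlin$ has full column rank (guaranteed by the assumptions $\nrows \ge \nred$ and $\subsampler\Vlin$ full rank), $\pvp \subsampler \Vlin = \bfI_{\nred}$; second, the identity $\pvp \subsampler \Vnonlin = \boldsymbol{0}$ shown in \eqref{eq:PVpPW_is_zero} during the proof of Proposition~\ref{prop:idempotent}, which is where the assumption that $\Vnonlin$ minimizes~\eqref{eq:ourmethod:finalcoeffcomput} enters. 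Combining these yields $\pvp \subsampler \hat{\fullstate}' = \redstate'$ and $\pvp \subsampler \fullstate = \subencoder(\subsampler \fullstate) =: \redstate^*$.

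Therefore the objective collapses to
\[
\|\pvp \subsampler (\hat{\fullstate}' - \fullstate)\|_2^2 = \|\redstate' - \redstate^*\|_2^2\,.
\]
This is nonnegative and equals zero if and only if $\redstate' = \redstate^*$. The corresponding point in $\mathcal{M}_r$ is $\hat{\fullstate} = \Vlin \redstate^* + \Vnonlin \featuremap(\redstate^*) = \decoder_{\Vlin,\Vnonlin}(\subencoder(\subsampler\fullstate))$, which is exactly the QMSR approximation.

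The only subtle step is uniqueness, since a point in $\mathcal{M}_r$ need not have a unique parametrization by $\redstate' \in \R^{\nred}$. To handle this, I would observe that the parametrization is in fact unique on $\mathcal{M}_r$ under our standing assumption: if $\Vlin \redstate'_1 + \Vnonlin \featuremap(\redstate'_1) = \Vlin \redstate'_2 + \Vnonlin \featuremap(\redstate'_2)$, then applying $\pvp \subsampler$ to both sides and using $\pvp \subsampler \Vlin = \bfI_{\nred}$ together with $\pvp \subsampler \Vnonlin = \boldsymbol{0}$ gives $\redstate'_1 = \redstate'_2$. Hence each $\hat{\fullstate}' \in \mathcal{M}_r$ corresponds to a unique $\redstate'$, the minimizer $\redstate' = \redstate^*$ is unique, and so is $\hat{\fullstate}$.

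The main obstacle, modest as it is, lies in this uniqueness step: one has to recognize that injectivity of the parametrization is not given a priori but follows from the same two linear-algebraic identities that drive the collapse of the objective. Everything else is routine pseudoinverse manipulation and reuse of \eqref{eq:PVpPW_is_zero} from Proposition~\ref{prop:idempotent}.
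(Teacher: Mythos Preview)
Your proof is correct and follows essentially the same route as the paper: parametrize $\mathcal{M}_r$ by $\redstate$, use \eqref{eq:PVpPW_is_zero} to eliminate the $\Vnonlin\featuremap(\redstate)$ contribution, and reduce to a linear least-squares problem in $\redstate$ with unique minimizer $(\subsampler\Vlin)^{+}\subsampler\fullstate$. You are in fact slightly more careful than the paper in explicitly verifying injectivity of the parametrization $\redstate \mapsto \decoder_{\Vlin,\Vnonlin}(\redstate)$, which is needed to transfer uniqueness of the minimizer from $\R^{\nred}$ to $\mathcal{M}_r$.
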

\begin{proof}
We can rewrite~\eqref{eq:QMSRProp:ProofUniqueSol} as
\begin{align*}
\label{eq:QMSROptProb}
\operatorname*{arg\,min}_{\redstate \in \R^{\nred}} \|(\subsampler\Vlin)^{+}\subsampler(\Vlin \redstate + \Vnonlin \featuremap(\redstate) - \fullstate)\|_2^2=
\operatorname*{arg\,min}_{\redstate \in \R^{\nred}} \|(\subsampler\Vlin)^{+}\subsampler(\Vlin \redstate - \fullstate)\|_2^2,
\end{align*}
where we used~\eqref{eq:PVpPW_is_zero}. This is a linear least-squares problem, which has the unique solution $\redstate=\pvp \subsampler \fullstate$, when $\subsampler \Vlin$ has full column rank.
\end{proof}

We now show that if we have given a point $\fullstate \in \mathcal{M}_r$ on the quadratic manifold, then the QMSR approximation $\hat{\fullstate}$ is the point $\fullstate$ if $\subsampler\Vlin$ has full rank. This property is analogous to the property of empirical interpolation \cite{BarraultMNP2004Empirical,ChaturantabutS2010Nonlinear} that a point that is in the linear approximation space is mapped to itself, under certain conditions.
We now show that the analogous property holds for QMSR. 

\begin{corollary}
Denote the QMSR approximation by $\hat{\fullstate} = g_{\Vlin,\Vnonlin}(f_{\Vlin,\subsampler}(\subsampler\fullstate))$. If $\nrows \ge \nred$ and $\subsampler \Vlin$ has full column rank, then, for the QMSR approximation of a point $\fullstate \in \mathcal{M}_r$ on the manifold, we have that $\hat{\fullstate}=\fullstate$.
\end{corollary}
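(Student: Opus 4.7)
The plan is to directly unwind the definitions and reuse the two key linear-algebraic identities that were already established inside the proof of Proposition~\ref{prop:idempotent}, namely $(\subsampler\Vlin)^{+}\subsampler\Vlin = \bfI_r$ (which follows from $\subsampler\Vlin$ having full column rank) and $\pvp\subsampler\Vnonlin = \boldsymbol{0}$ (equation~\eqref{eq:PVpPW_is_zero}, which holds because $\Vnonlin$ is the minimizer of~\eqref{eq:ourmethod:finalcoeffcomput}).

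First, because $\fullstate \in \mathcal{M}_r(\Vlin,\Vnonlin)$, the definition~\eqref{eq:Prelim:ManifoldDef} of the set $\mathcal{M}_r$ guarantees the existence of some $\redstate \in \R^{\nred}$ with
\[
\fullstate = \Vlin\redstate + \Vnonlin\featuremap(\redstate).
\]
Next, I would apply the sparse encoder to $\subsampler\fullstate$ and substitute this representation:
\[
\subencoder(\subsampler\fullstate) = \pvp\subsampler\fullstate = \pvp\subsampler\Vlin\,\redstate + \pvp\subsampler\Vnonlin\,\featuremap(\redstate).
\]
The first term collapses to $\redstate$ by the full-rank hypothesis on $\subsampler\Vlin$, and the second term vanishes by~\eqref{eq:PVpPW_is_zero}. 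Hence $\subencoder(\subsampler\fullstate) = \redstate$.

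Finally, applying the quadratic decoder recovers
\[
\decoder_{\Vlin,\Vnonlin}(\subencoder(\subsampler\fullstate)) = \Vlin\redstate + \Vnonlin\featuremap(\redstate) = \fullstate,
\]
which is the claimed identity. There is no real obstacle here: the corollary is essentially a direct corollary of the algebraic identities already distilled in the proof of Proposition~\ref{prop:idempotent}, with the only subtlety being that the $\redstate$ associated with a point on $\mathcal{M}_r$ need not be unique, but this is irrelevant because the argument above works for any such $\redstate$.
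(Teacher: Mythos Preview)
Your proof is correct and follows essentially the same route as the paper's own argument: write $\fullstate = \Vlin\redstate + \Vnonlin\featuremap(\redstate)$, use the full-column-rank hypothesis to get $\pvp\subsampler\Vlin = \bfI_r$, invoke~\eqref{eq:PVpPW_is_zero} to kill the $\Vnonlin$ term, and then decode. Your remark about the possible non-uniqueness of $\redstate$ is a harmless extra observation that the paper does not bother to make.
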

\begin{proof}
We can write any $\fullstate \in \mathcal{M}_r$ as $\Vlin \redstate + \Vnonlin\featuremap(\redstate)$ for an appropriate $\redstate \in \R^{\nred}$. Then we have that
\begin{align*}
f_{\Vlin,\subsampler}(\fullstate)=\pvp \subsampler (\Vlin \redstate+\Vnonlin\featuremap(\redstate))=\pvp \subsampler \Vlin \redstate=\redstate,
\end{align*}
where the second equality is again a consequence of the construction of $\Vnonlin$, that is also used in the proof of Proposition~\ref{prop:idempotent} and the third equality holds when $\pvp$ is the left inverse of $\subsampler \Vlin$, which is the case when $\subsampler \Vlin$ has full column rank. Since for any $\fullstate = \Vlin \redstate + \Vnonlin\featuremap(\redstate) \in \mathcal{M}_r$ it holds that $f_{\Vlin,\subsampler}(\fullstate)=\redstate$, we have that $\hat{\fullstate}=\fullstate$.
\end{proof}

\begin{algorithm}[t]
  \caption{Empirical sparse regression on quadratic manifolds}
  \label{algo:main}
  \begin{algorithmic}[1]
    \Procedure{QMSR}{$\snapshots, \nrows, \nred, \nconsider, \gamma$}
    \State{Compute the first $\nconsider$ singular vectors $\leftsing{1},\dots,\leftsing{\nconsider}$ of the data matrix $\snapshots$.} 
    \State{Apply \texttt{QDEIM} to $\Phi_{\nrows} = [\leftsing{1}, \dots, \leftsing{\nrows}]$ to obtain sampling operator $\subsampler$.} 
    \State{Set $\mathcal{I}_0=\{\}, \Vlin_0 = []$}
    \For{$i = 1,\dots, r$}
    \State{Compute $\leftsing{j_i}$ that minimizes~\eqref{eq:ourmethod:minimizationprob} over $\leftsing{j^{1}}, \dots, \leftsing{j^m}$ for $\Vlin_{i-1}$ and $\subsampler$.}
    \State{Set $\Vlin_i = [\leftsing{1}, \dots, \leftsing{j_i}]$}
    \EndFor
    \State{Set $\Vlin= \Vlin_r$} 
    \State{Fit $\Vnonlin$ via the least-squares problem \eqref{eq:ourmethod:finalcoeffcomput} with regularization parameter $\gamma$.}
    \State{Return $\Vlin, \Vnonlin, \subsampler$.}
    \EndProcedure
  \end{algorithmic}
\end{algorithm}

\subsection{Constructing a sampling operator}
The proposed sparse greedy method and QMSR are applicable with any subsampling operator $\subsampler$. We propose here one approach for constructing $\subsampler$ that leverages that the greedy method selects the columns of $\Vlin$ from the first $\nconsider$ left-singular vectors. Because we already need the operator $\subsampler$ to select the columns of $\Vlin$, we select $\subsampler$ based on the first $\nrows$ left-singular vectors of the data matrix $\snapshots$. In particular, we aim to select $\subsampler$ such that the data $\snapshots$ can be well approximated in the space spanned by the first $\nrows$ left-singular vectors, which we achieve by applying QDEIM \cite{DrmacG2016New} to the matrix $\leftsings_{\nrows} = [\leftsing{1}, \dots, \leftsing{\nrows}]$ given by the first $\nrows$ left-singular vectors. Notice that any other method for selecting samplings points in empirical interpolation can be applied \cite{BarraultMNP2004Empirical, ChaturantabutS2010Nonlinear,PeherstorferDG2020Stability}. 
Even though the decoder function $g_{\Vlin,\Vnonlin}$ is quadratic in our case, the sparse encoder function $\subencoder$ is still linear. 
By using $\nrows > \nred$ singular vectors in $\leftsings_{\nrows}$, we take into account that $\Vlin$ is constructed by the sparse greedy method with later left-singular vectors. 

\subsection{Algorithms for QMSR}
We summarize the procedure to construct the sampling operator $\subsampler$, the basis matrix $\Vlin$, and the weight matrix $\Vnonlin$ for QSMR in Algorithm~\ref{algo:main}. The inputs to the algorithm are the data matrix $\snapshots$, the number of sparse samples $\nrows$, the reduced dimension $\nred$, the number $\nconsider$ of left-singular vectors to consider  in the sparse greedy method, and the regularization parameter $\gamma$. First, the left-singular vectors of the data matrix $\snapshots$ are constructed and the sampling operator $\subsampler$ is constructed by applying QDEIM to the matrix with the first $\nrows$ left-singular vectors as columns. Then the sparse greedy method described in Section~\ref{sec:QMSR:SparseGreedy} is applied. It selects the $\nred$ left-singular vectors from the first $\nconsider$ left-singular vectors of $\snapshots$ to assemble the basis matrix $\Vlin$. The weight matrix $\Vnonlin$ is then fitted via the least-squares problem \eqref{eq:ourmethod:finalcoeffcomput} with regularization parameter $\gamma$. The procedure returns the basis matrix $\Vlin$, the weight matrix $\Vnonlin$, and the sampling operator $\subsampler$.

\section{Numerical examples}\label{sec:NumExp}
We demonstrate QSMR on three different data sets that are generated by simulating collisionless charged particles, interacting pulse signals, and rotating detonation waves, respectively. 
    In all experiments, we set the regularization parameter in the optimization problem~\eqref{eq:ourmethod:finalcoeffcomput} to $\gamma=10^{-8}$. Code is available on GitHub \url{https://github.com/Algopaul/qmsr\_demo.git}.

In all experiments, we report the relative error of approximating test data points that were not used during training. The relative error is computed as
        \begin{equation}\label{eq:NumExp:RelErr}
          E_{\mathrm{rel}}(\hat{\snapshots}^{(\mathrm{test})}) = \frac{1}{\frob{\snapshots^{(\mathrm{test})}}}\frob{\hat{\snapshots}^{(\text{test})} - \snapshots^{(\mathrm{test})}},
        \end{equation}
where $\snapshots^{(\text{test})}$ is the test data matrix and $\hat{\snapshots}^{(\text{test})}$ the approximation.

\subsection{Collisionless charged particles}
We consider density fields governed by the Vlasov equation with a fixed potential, which describes the motion of collisionless charged particles under the influence of an electric field; see \cite{Gu-cluGCH2014Arbitrarily,BermanP2024CoLoRA} for details about the setup.

  \subsubsection{Setup}
  We study the particle density, denoted by $u$, that is governed by the Vlasov equation 
    \begin{align}
      \partial_t u(t, x_1, x_2) = -x_2 \partial_{x_1}u(t, x_1, x_2) +\phi(x_1)\partial_{x_2}u(t, x_1, x_2)\,,
    \end{align}
    with the potential
    \[
    \phi(x_1)=0.2+0.2\cos(\pi x_1^4)+0.1\sin(\pi x_1),
    \]
    over $x = [x_1, x_2]^T \in [-1, 1)^2$ and $t \in [0, 5)$. 
   The coordinates $x_1$ and $x_2$ correspond to the position and velocity of the particles, respectively. We impose periodic boundary conditions.
   The initial condition is given by $u_0(x_1, x_2)=\exp(-10^2 (x_1^2 + x_2^2))$.
    To numerically solve the Vlasov equation, we discretize the spatial domain $[-1,1)^2$ with 600 grid points in each dimension and use second-order central finite differences for the differential operators, which leads to a state dimension of $\nfull=360,000$.
    The time interval is discretized with time-step size $\delta t=2\times 10^{-3}$ and a fourth-order explicit Runge-Kutta scheme.
    We obtain the data vectors $\fullstatei{i} \in \mathbb{R}^{\nfull}$ at the time steps $i = 1, 2, 3, \dots$, where each component corresponds to the numerical approximation of the solution field $u(t_i, x_1, x_2)$ at time $t_i = (i-1)\delta t$ and at one of the $600 \times 600$ grid points in the domain $[-1, 1)^2$. 
    We split these $2500$ data vectors into $\nsnapshots = 1250$ training and 1250 test data vectors, where data vectors with even time steps $i = 0, 2, 4, \dots$ are training data vectors and data vectors corresponding to odd time steps $i = 1, 3, 5, \dots$ are test data vectors. 

\begin{figure}
    \centering
    \resizebox{1.0\textwidth}{!}{\input{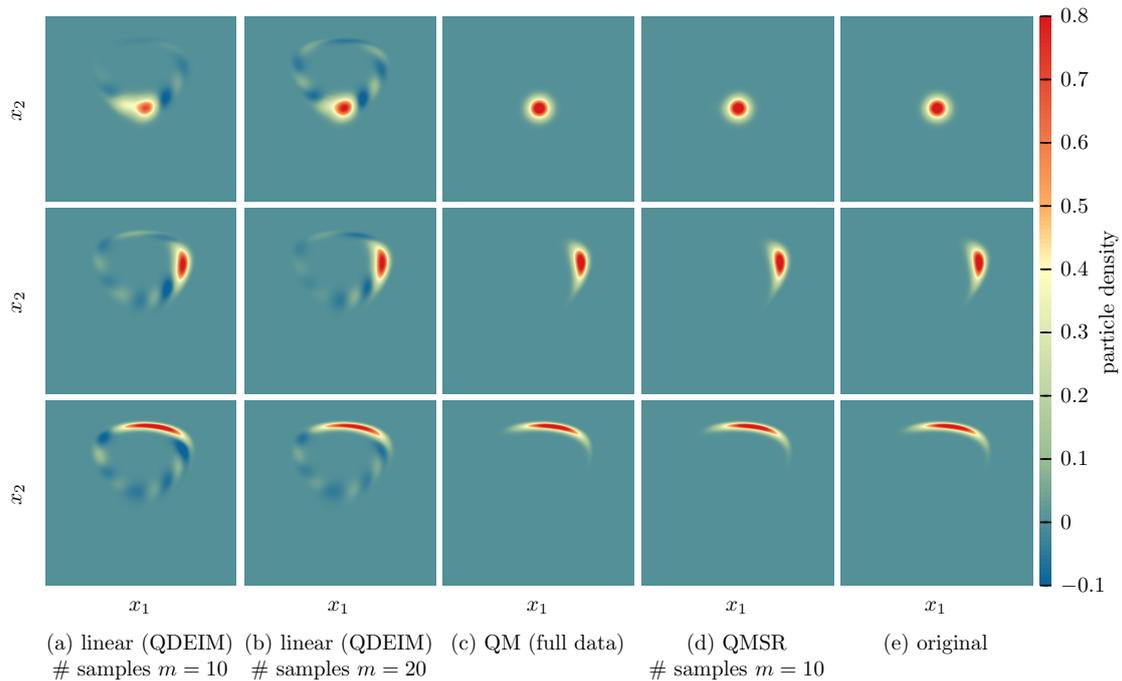}}
    \caption{Charged particles: QMSR approximations are visually indistinguishable from the original data vectors, even though only $m = 10$ sparse samples out of the $n = 360,000$ components are used. QMSR also leverages the expressivity of quadratic manifolds to achieve significantly higher accuracy than the linear approximations obtained with empirical interpolation.} 
    \label{fig:vlasov:reconstruction}
\end{figure}

\begin{figure}
    \centering
    \resizebox{1.0\textwidth}{!}{\input{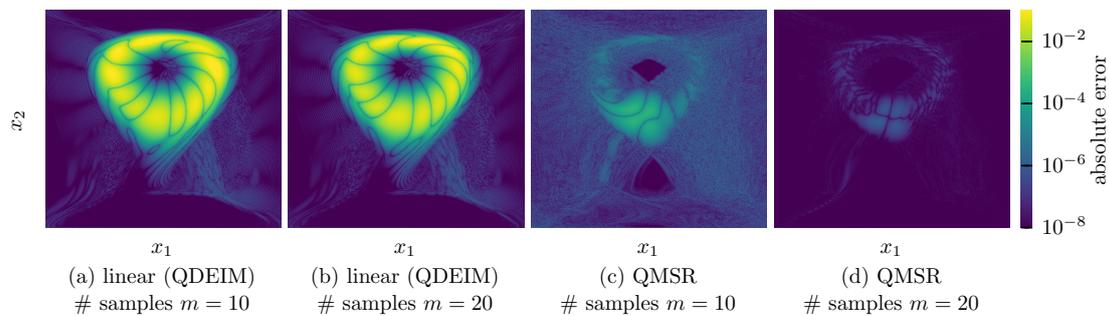}}
    \caption{Charged particles: Doubling the number of sparse samples from $m = 10$ to $m = 20$ significantly increases the accuracy of QMSR approximations in this example. In contrast,  doubling the number of samples has little effect on the linear approximations obtained with empirical interpolation because the linear approximation space is limiting the approximation accuracy.} 
    \label{fig:vlasov:err_reconstruction}
\end{figure}

\begin{figure}
    \centering
    \resizebox{1.0\textwidth}{!}{\input{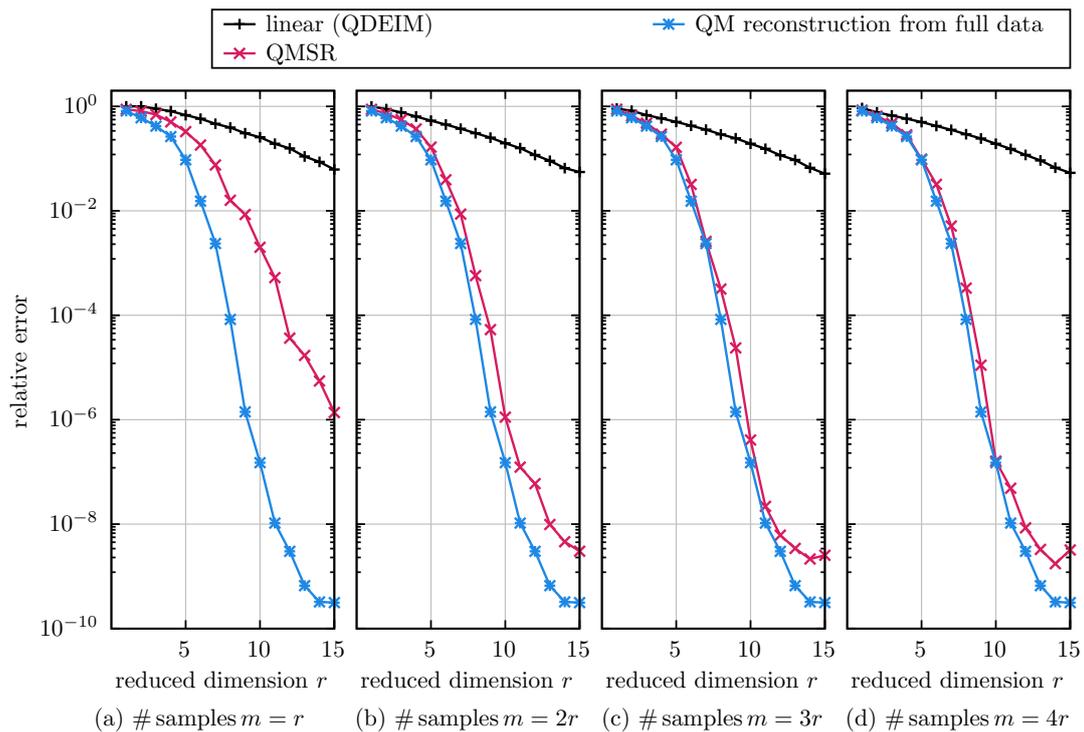}}
    \caption{Charged particles: The QMSR approximations obtained with $m = 2r$ sparse samples achieve a comparable accuracy on test data vectors as reconstructing the full data vectors on the quadratic manifold.}
\label{fig:vlasov:rel_errs}
\end{figure}

\subsubsection{Results}\label{sec:NumExp:Vlasov:Results}
We compare three methods. First, we consider empirical interpolation \cite{BarraultMNP2004Empirical,ChaturantabutS2010Nonlinear} with the QDEIM sampling operator \cite{DrmacG2016New}. The subspace for the linear approximation obtained with empirical interpolation is spanned by the first 
$\nred$ left-singular vectors of the data matrix, which has the training data points as its columns. 
We apply QDEIM to the matrix $\Phi_{\nrows}$ that has as columns the first $\nrows$ left-singular vectors to construct the sampling operator $\subsampler$. Notice that $\nrows \geq \nred$ in our examples.
Second, we consider the reconstruction obtained on a quadratic manifold that is trained with the greedy method proposed in \cite{SchwerdtnerP2024Greedy}. The reconstruction uses the linear encoder $f_{\Vlin}$ that \emph{uses all components} $s_1, \dots, s_{\nfull}$ ($\nrows = \nfull$) of a test data point $\fullstate = [s_1, \dots, s_{\nfull}]^{\top}$ for finding an approximation; see  Section~\ref{sec:Prelim:LinNonlinDecoder}. Third, we consider QMSR that uses only $\nrows \ll \nfull$ sparse samples $\subsampler \fullstate$ of a test data point $\fullstate$ to find an approximation on a quadratic manifold. We obtain the sampling operator $\subsampler$, the basis matrix $\Vlin$, and the weight matrix $\Vnonlin$ as described in Algorithm~\ref{algo:main} from the training data: The sampling operator is constructed from the first $\nrows$ left-singular vectors of the training data matrix and the basis matrix $\Vlin$ and weight matrix $\Vnonlin$ are constructed from the training data points using the sparse greedy method introduced in Section~\ref{sec:QMSR:SparseGreedy}. 

Figure~\ref{fig:vlasov:reconstruction} shows the approximations of the test data vectors corresponding to times $t = 0$, $t  = 1.67$, and $t = 2.5$ obtained with QDEIM with $\nrows = 10$ and $\nrows = 20$ sparse samples, the quadratic manifold reconstruction, and our QMSR with $\nrows = 10$ sparse samples. The reduced dimension is $\nred = 10$ in all cases. The linear approximation given by QDEIM with $\nrows = 10$ and $\nrows = 20$ sparse samples leads to clearly visible errors. In contrast, the approximations on the quadratic manifolds are visually indistinguishable from the original test data points. In particular, notice that the QSMR approximation that receives only $\nrows = 10$ sparse samples---only $\nrows = 10$ components out of a total of $\nfull = 360,000$ components---achieves an approximation that is almost indistinguishable from the reconstruction on a quadratic manifold, which  uses all components from the test data vectors to find the reconstruction on the quadratic manifold. To give an example of which left-singular vectors are selected by our sparse greedy method for QMSR, we provide here the indices for reduced dimension $\nred=10$ and number of sparse sample points $\nrows=10$, 
\[
\Vlin=\begin{bmatrix}\leftsing{1}, \leftsing{3}, \leftsing{18}, \leftsing{2}, \leftsing{140}, \leftsing{9}, \leftsing{59}, \leftsing{182}, \leftsing{42}, \leftsing{118}\end{bmatrix},
\]
whereas the basis of the reconstruction that uses full data is given by
\[
\Vlin=\begin{bmatrix}
\leftsing{1},  \leftsing{3},  \leftsing{5},  \leftsing{6},  \leftsing{2},  \leftsing{10}, \leftsing{16}, \leftsing{11}, \leftsing{17}, \leftsing{24}
\end{bmatrix}.
\]

The point-wise error $|\fullstate-\hat{\fullstate}|$, where $| \cdot |$ is applied component-wise for the test data points corresponding to time $t=2.5$ is plotted in Figure~\ref{fig:vlasov:err_reconstruction}. It can be observed that doubling the number of sample points $\nrows$ leads to only slightly higher accuracy of the QDEIM approximations, whereas doubling the number of sample points from $\nrows = 10$ to $\nrows = 20$ for QMSR achieves significantly higher accuracy of at least one order of magnitude. 

The relative error~\eqref{eq:NumExp:RelErr} is shown in Figure~\ref{fig:vlasov:rel_errs} for reduced dimensions between $\nred = 1$ and $\nred = 15$ and number of sample points $\nrows \in \{\nred, 2\nred, 3\nred, 4\nred\}$. Already with $\nrows = 2\nred$ sparse samples, the QMSR approximation achieves a comparable relative error averaged over all test data points as the reconstruction that uses the full test data points for encoding. In all cases, the linear approximation with QDEIM leads to orders of magnitude lower accuracy. 

\begin{figure}
    \centering
    \resizebox{1.0\textwidth}{!}{\input{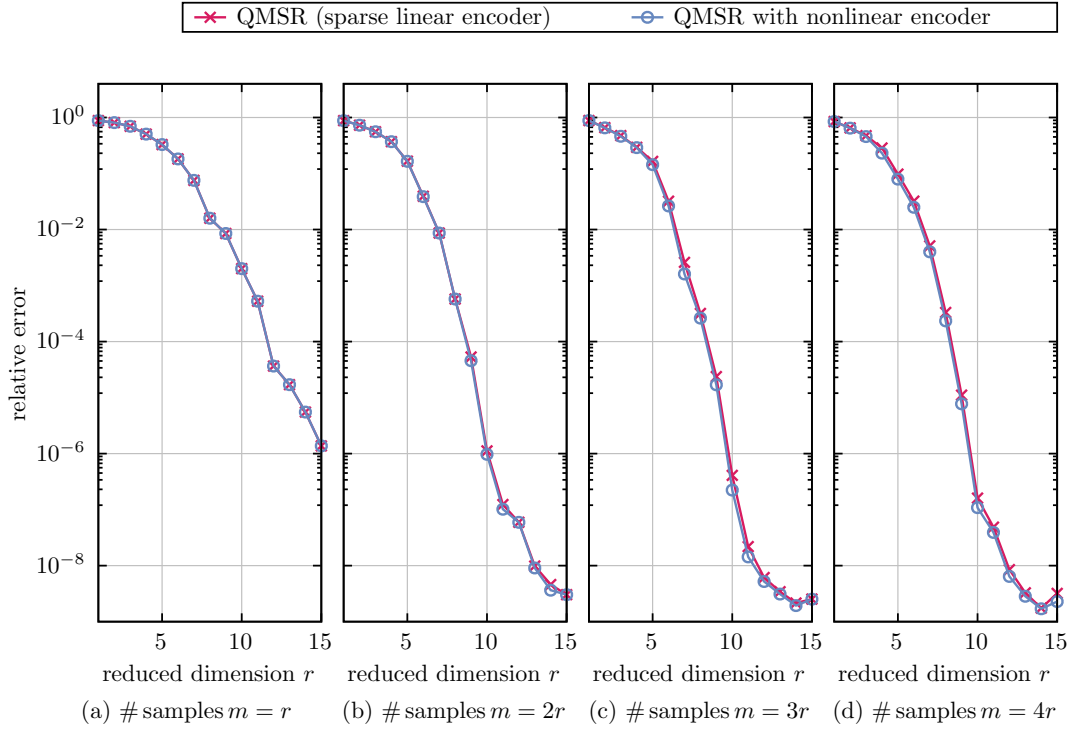}}
    \caption{Charged particles: Computing encoding with the sparse linear encoder of QMSR achieves comparable results as the nonlinear encoding obtained by applying the Gauss-Newton method to nonlinear regression problem \eqref{eq:ERQM:OptiProb}, which indicates that the 
    sparse linear encoder of QMSR is sufficient, at least in this example.}
    \label{fig:vlasov_appendix}
\end{figure}

Recall that QMSR obtains the encoded data point $\redstate$ via a sparse linear encoder function $f_{\Vlin,\subsampler}$. In Figure~\ref{fig:vlasov_appendix} we compare the QMSR approximation that uses the sparse linear encoder $f_{\Vlin,\subsampler}$ to an approximation on the same quadratic manifold that is encoded by solving the sparse nonlinear least-squares problem \eqref{eq:ERQM:OptiProb} with the Gauss-Newton method.
For this, we use at most 20 Gauss-Newton iterations and stop when the difference in the relative errors computed in subsequent iterations is less than $10^{-12}$. To enhance stability and convergence, we add a damping coefficient to the Gauss-Newton iterations. We test damping coefficients $\{10^{-8}, 10^{-4}, \dots, 10^{8}\}$ and report in Figure~\ref{fig:vlasov_appendix} the lowest relative error among all damping coefficients.
The plots in Figure~\ref{fig:vlasov_appendix} show that for reduced dimensions between $\nred = 1$ and $\nred = 15$ and number of sparse samples from $\nrows = \nred$ to $\nrows = 4\nred$, the QMSR approximation with the sparse linear encoder achieves comparable results as when computing the encoding by directly solving the sparse nonlinear least-squares problem with the Gauss-Newton method. This result is in agreement with reconstructions on quadratic manifolds from full data samples, which can be found in \cite{SchwerdtnerP2024Greedy}, and 
provides evidence that the sparse linear encoder function is sufficient, at least in this example. 

\subsection{Hamiltonian interacting pulse signals}
We now consider a traveling pulse signal in a two-dimensional periodic domain, which is governed by the acoustic wave equation; see \cite{SchwerdtnerP2024Greedy} for details about the setup. 

\subsubsection{Setup}
Consider the acoustic wave equation over a two-dimensional spatial domain $[-4, 4)^2$ in Hamiltonian form with periodic boundary conditions,
\begin{align}
\label{eq:acoustic_wave_equation}
  \begin{split}
    \partial_t \rho(t, x) 	&= -\nabla\cdot v(t, x)\,,\\
    \partial_t v(t, x) 		&= -\nabla\rho(t, x)\,, \\
    \rho(0, x) &= \rho_0(x)\,, \\
    v(t, 0) &= 0\,,
  \end{split}
\end{align}
where $\rho: [0, T] \times [-4, 4)^2 \to \R$ denotes the density and $v: [0, T] \times [-4, 4)^2 \to \R^2$ the velocity field. The velocity $v$ consists of two component functions $v_1$ and $v_2$ corresponding to the velocities in the two spatial directions. We set the initial condition to
\begin{equation}
  \rho_0(x_1, x_2) = \exp\left(-(2\pi)^2 \left((x_1 - 2)^2 + (x_2 - 2)^2\right)\right),
\end{equation}
and $v(0, x)=0$. We use a central finite difference scheme with 600 degrees of freedom in each spatial direction, which leads to dimension $\nfull=3 \times 600^2=1\,080\,000$. We collect $1600$ numerical solutions of $\rho$ and $v$ computed with the Runge-Kutta method of 4-th order in the time-interval $[0,8]$ and time-step size $\delta t = 5 \cdot 10^{-3}$.
We obtain 1600 data points $\fullstatei{i}$, at the time steps $i=1,, 2, 3, \dots$, where each component corresponds to the numerical approximation of the concatenated solution fields $\rho(t_i, x_1, x_2),v_1(t_i, x_1, x_2),v_2(t_i, x_1, x_2)$ at time $t_i=(i-1)\delta t$. We use the $\nsnapshots = 800$ data points at even time steps as training data and the ones corresponding to odd time steps as test data points. 

\subsubsection{Results}
Similarly to Section~\ref{sec:NumExp:Vlasov:Results}, we compare approximations obtained with QMSR from $\nrows$ sparse samples to the reconstructions on quadratic manifolds that use the full data points and the linear approximations obtained with QDEIM with $\nrows$ sparse samples. Let us first consider the reduced dimension $\nred = 20$. The basis matrix $\Vlin$ constructed by the sparse greedy method for $\nred=20$ and $\nrows=20$ contains the following left-singular vectors 
  \begin{align*}
  \Vlin=\left[\leftsing{15}, \leftsing{2}, \leftsing{12}, \leftsing{19}, \leftsing{9}, \leftsing{16}, \leftsing{182}, \leftsing{20}, \leftsing{14}, \leftsing{66}, \leftsing{46}, \leftsing{190},\right.\\ \left.\leftsing{192}, \leftsing{29}, \leftsing{76}, \leftsing{62}, \leftsing{114}, \leftsing{13}, \leftsing{151}, \leftsing{91}\right]\,,
  \end{align*}
which again differs from the basis selected for the full reconstruction given by
\begin{align*}
\Vlin=\left[
  \leftsing{17},  \leftsing{11},  \leftsing{13},  \leftsing{20},  \leftsing{22},   \leftsing{2},   \leftsing{10},  \leftsing{18},  \leftsing{35},   \leftsing{5},  \leftsing{16},  \leftsing{25},\right.\\\left.  \leftsing{50},  \leftsing{60},  \leftsing{74},  \leftsing{86},  \leftsing{92}, \leftsing{102}, \leftsing{99},  \leftsing{40}
  \right].
\end{align*}

Figure~\ref{fig:hamwave:reconstruction} shows approximations of the test data points corresponding to times $t \in \{0, 4, 8\}$. 
Our QMSR approach provides approximations that achieve comparable errors as the reconstructions on the quadratic manifold that use the full data vectors, i.e., all $\nfull$ components instead of only $\nrows$ many as QMSR. Linear approximations with QDEIM from $\nrows$ sparse samples lead to orders of magnitude higher errors that approximations on quadratic manifolds in this example. Even doubling the number of sparse samples from $\nrows = 20$ to $\nrows = 40$ leads only to slightly lower errors of the QDEIM approximations, which indicates that indeed the linearity of the approximation is limiting the QDEIM accuracy rather than too few sparse samples. In contrast, as can be observed in the relative point-wise errors shown in Figure~\ref{fig:hamwave:err_reconstruction}, doubling the number of sparse sample points in QMSR leads to a lower error of orders of magnitude. Figure~\ref{fig:hamwave:rel_errs} shows the averaged relative error \eqref{eq:NumExp:RelErr} over all test data points. The QMSR approximations with $\nrows = 2\nred$ sparse samples achieve orders of magnitude lower errors than linear approximations with QDEIM and are close to the accuracy achieved by reconstructions on quadratic manifolds that use all components of the data vectors points for finding encodings. Analogous to the previous example, we show in Figure~\ref{fig:hamwaveGN} that the encoding with the sparse linear encoder function of QMSR achieves comparable accuracy as computing the nonlinear encoding by numerically solving the least-squares problem \eqref{fig:hamwaveGN} with the Gauss-Newton method in this example.

\begin{figure}
    \centering
    \resizebox{1.0\textwidth}{!}{\input{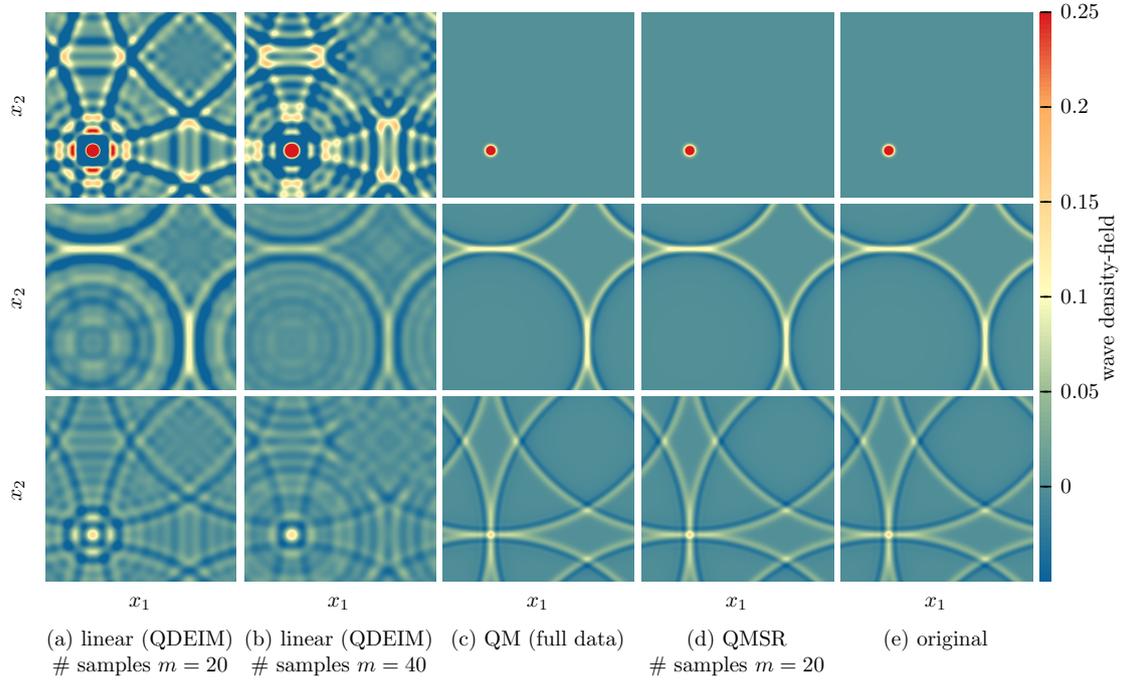}}
    \caption{Hamiltonian wave: The QMSR approximations from $m = 20$ samples out of $n = 1,080,000$ components accurately capture the wave evolutions over time, similarly to the quadratic-manifold reconstructions that use all $n = 1,080,000$ components. Linear approximations with empirical interpolation lead to orders of magnitude higher errors.} 
    \label{fig:hamwave:reconstruction}
\end{figure}

\begin{figure}
    \centering
    \resizebox{1.0\textwidth}{!}{\input{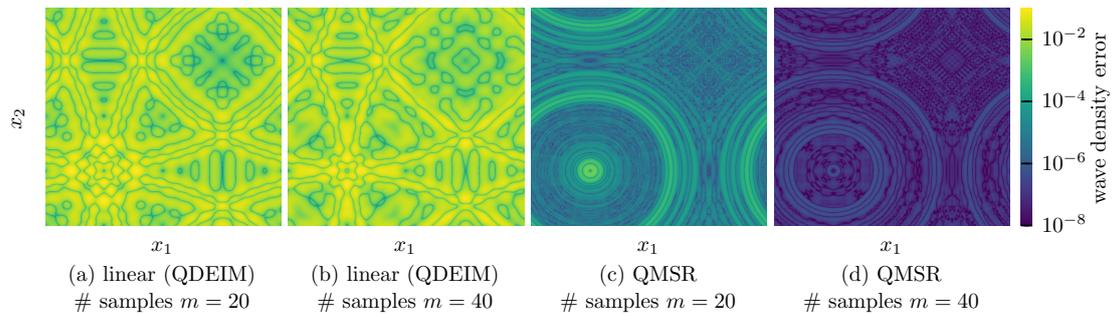}}
    \caption{Hamiltonian wave: Doubling the number $m$ of sampling points decreases the error of QMSR approximations by orders of magnitude. In contrast, doubling the number of sampling points in empirical interpolation has little effect on the error because the linear approximation space is limiting the accuracy in this example.}
    \label{fig:hamwave:err_reconstruction}
\end{figure}

\begin{figure}
    \centering
    \resizebox{1.0\textwidth}{!}{\input{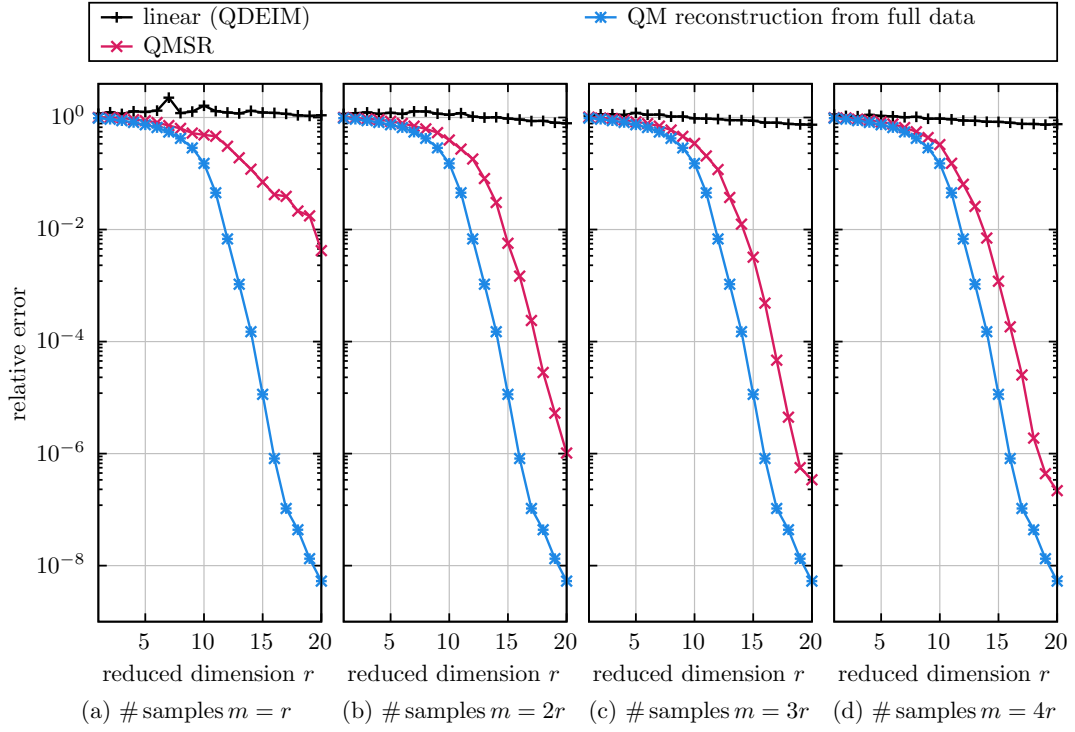}}
    \caption{Hamiltonian wave: The QMSR approximations achieve up to six orders of magnitude higher accuracy than linear approximations while using only up to $m = 80$ sparse samples out of $n = 1,080,000$  components. Notice that empirical interpolation with $m = r$ sparse samples shows instabilities, which is in agreement with \cite{PeherstorferDG2020Stability}.} 
    \label{fig:hamwave:rel_errs}
\end{figure}

\begin{figure}
    \centering
    \resizebox{1.0\textwidth}{!}{\input{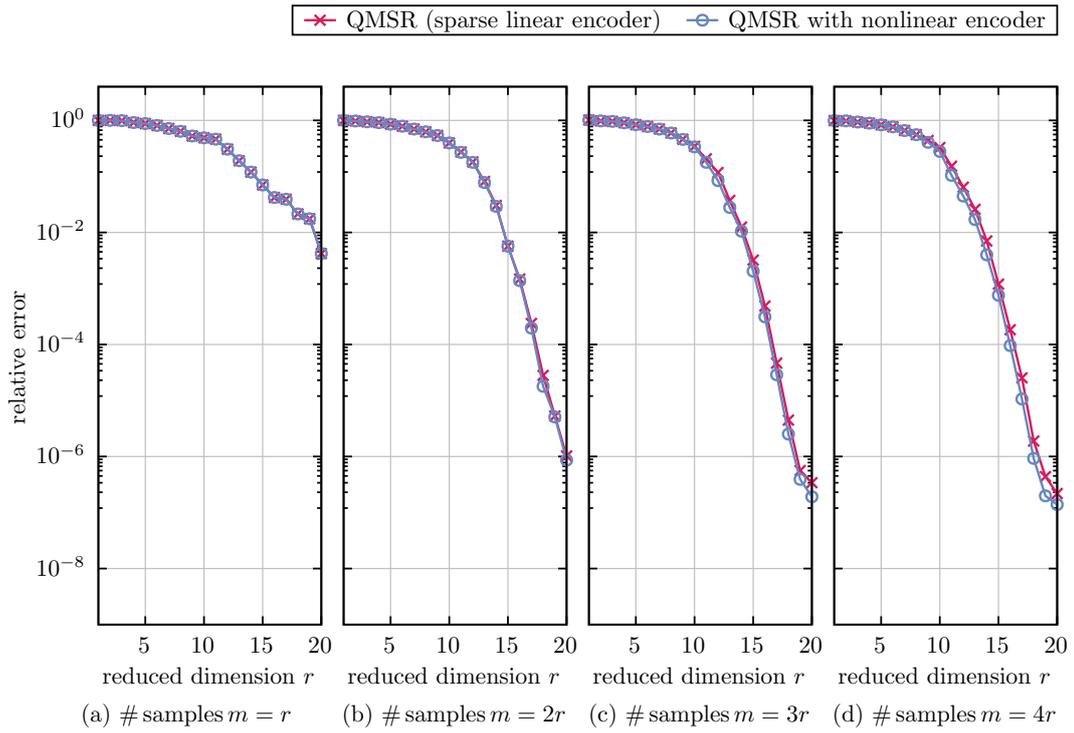}}
    \caption{Hamiltonian wave: The sparse linear encoder of QMSR achieves comparable accuracy as the nonlinear encoding obtain by solving the nonlinear least-squares problem \eqref{eq:ERQM:OptiProb} with the Gauss-Newton method in this example.}
    \label{fig:hamwaveGN}
\end{figure}

\subsection{Rotating detonation waves}
We now consider data that represent rotating detonation waves that arise in rotating detonation engines \cite{AnandG2019Rotating, RamanPG2023Nonidealities}. We use the data available from~\cite{CamachoH2024Investigations}.

\subsubsection{Setup}
The dimension of the data vectors is $\nfull=1,767,500$ in this example. A data vector represents the  pressure, velocities in two spatial directions, temperature, reactant concentrations and densities at a grid of $1414 \times 125$ points in the spatial domain.
We consider a data set that combines trajectories over time obtained for five different inlet velocities $v_{\text{in}} \in \{100\,\mathsf{m/s}, 125\,\mathsf{m/s}, 150\,\mathsf{m/s}, 175\,\mathsf{m/s}, 200\,\mathsf{m/s}\}$; see~\cite{CamachoH2024Investigations} for details of the setup. Each trajectory consists of 700 data points.
The data corresponding to inlet velocities $\{100\,\mathsf{m/s}, 125\,\mathsf{m/s}, 175\,\mathsf{m/s}, 200\,\mathsf{m/s}\}$ form our training data and the data corresponding to inlet velocity $v_{\text{in}}=150\,\mathsf{m/s}$ is our test data. 

\subsubsection{Results}
We compare QMSR to the linear approximations obtained with QDEIM and reconstructions on the quadratic manifold that use all components of data vectors. We show the approximations of the temperature and pressure fields corresponding to time $t = 10^{-3}s$ in Figure~\ref{fig:engine:reconstruction}. The reduced dimension is $\nred = 20$ and the number of sparse sampling points is $\nrows = 40$. The basis matrix $\Vlin$ constructed by the proposed sparse greedy method uses the following left-singular vectors, \begin{align*}
   \Vlin=\left[\leftsing{1}, \leftsing{2}, \leftsing{3}, \leftsing{5}, \leftsing{4}, \leftsing{8}, \leftsing{9}, \leftsing{13}, \leftsing{16}, \leftsing{17}, \leftsing{7},\right. \\\left.\leftsing{20}, \leftsing{11}, \leftsing{124}, \leftsing{125}, \leftsing{99}, \leftsing{29}, \leftsing{175}, \leftsing{151}, \leftsing{69}\right]\,,
   \end{align*}
shares the first few left-singular vectors with the basis selected for the full reconstruction, given by
\begin{align*}
  \Vlin=\left[\leftsing{1},   \leftsing{2},   \leftsing{3},   \leftsing{5},   \leftsing{6},   \leftsing{7},  \leftsing{11},  \leftsing{13},  \leftsing{24},  \leftsing{12},  \leftsing{31},\right.\\\left.  \leftsing{16},  \leftsing{47},   \leftsing{4},  \leftsing{83},  \leftsing{26},  \leftsing{14}, \leftsing{123}, \leftsing{41},  \leftsing{34}\right]\,.
\end{align*}

Comparing the linear approximations given by QDEIM and the quadratic approximations obtained with QMSR, one visually sees that the QMSR approximations are more accurate in the sense that they more accurately capture the shock front compared to the linear approximation. The averaged relative error corresponding to all test data points are plotted in Figure~\ref{fig:engine:rel_errs}. First, notice that QDEIM with $\nrows = \nred$ sparse samples, i.e., without oversampling, leads to unstable approximations; see \cite{PeherstorferDG2020Stability} for more details about instabilities of empirical interpolation. Second, the errors of the approximations obtained with all three methods are significantly higher than in the other examples, which shows that the data are challenging to reduce in this example. The error of the approximations obtained with QMSR from sparse samples shows similar behavior as the errors of the reconstruction that use full data for the encoding, which indicates that even when data are challenging to reduce, QMSR can leverage the higher expressivity of quadratic manifold compared to linear approximations with few sparse samples. A comparison between the sparse linear encoder and the nonlinear encoding is shown in Figure~\ref{fig:engineGN} and is in agreement with the results shown for the previous examples.

\begin{figure}
    \centering
    \resizebox{1.0\textwidth}{!}{\input{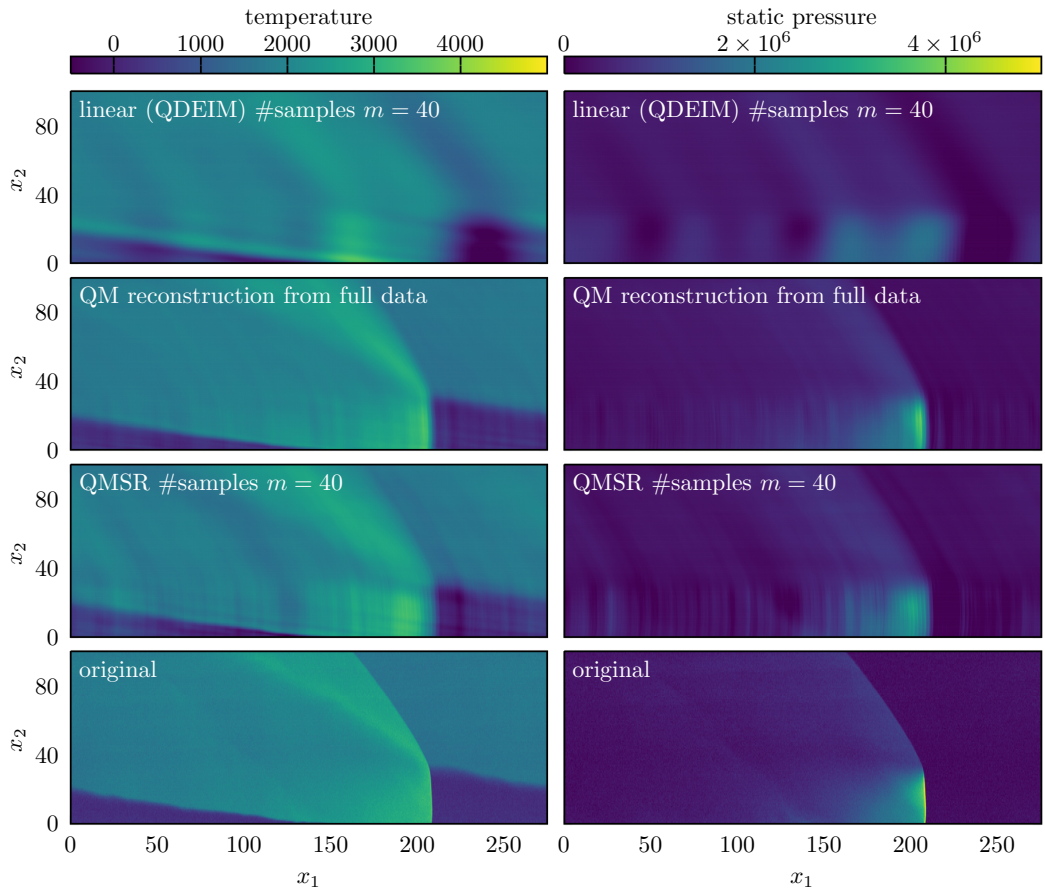}}
    \caption{Rotating detonation engines:
    The QMSR approximations capture more accurately the shock front than the linear approximations obtained with QDEIM. Note that all approximations contain negative temperature values; however, the negative values are more pronounced in the linear approximations.}
    \label{fig:engine:reconstruction}
\end{figure}

\begin{figure}
    \centering
    \resizebox{1.0\textwidth}{!}{\input{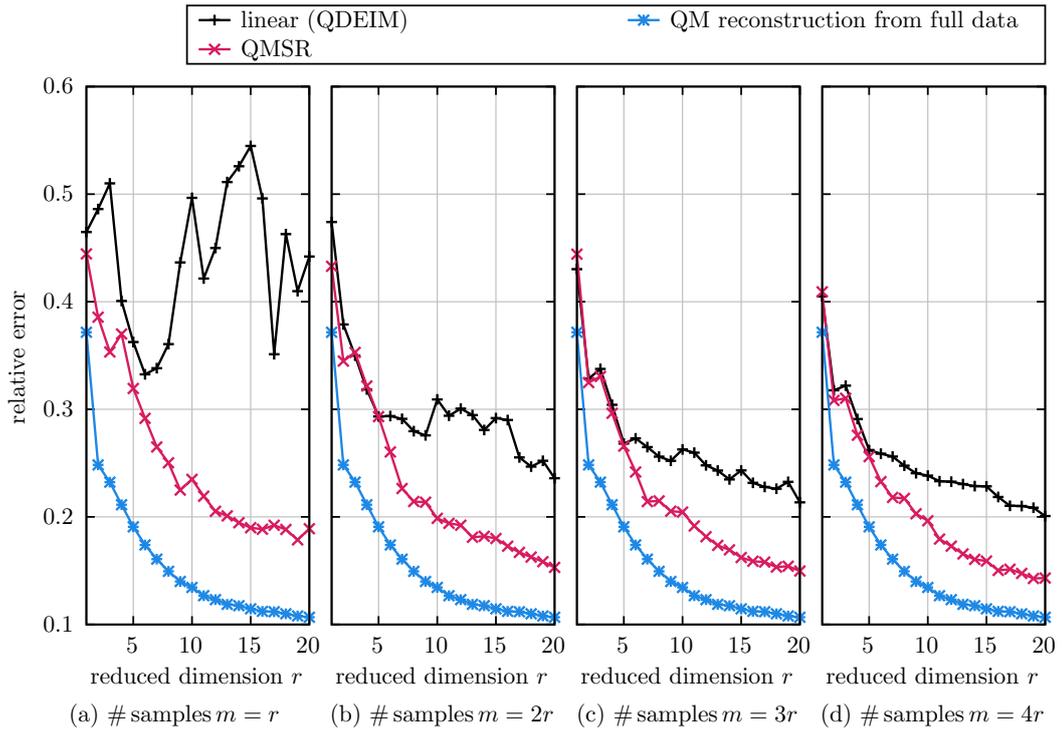}}
    \caption{Rotating detonation engines: The QMSR approximations on quadratic manifolds achieve higher accuracy than linear approximations.}
    \label{fig:engine:rel_errs}
\end{figure}

\begin{figure}
    \centering
    \resizebox{1.0\textwidth}{!}{\input{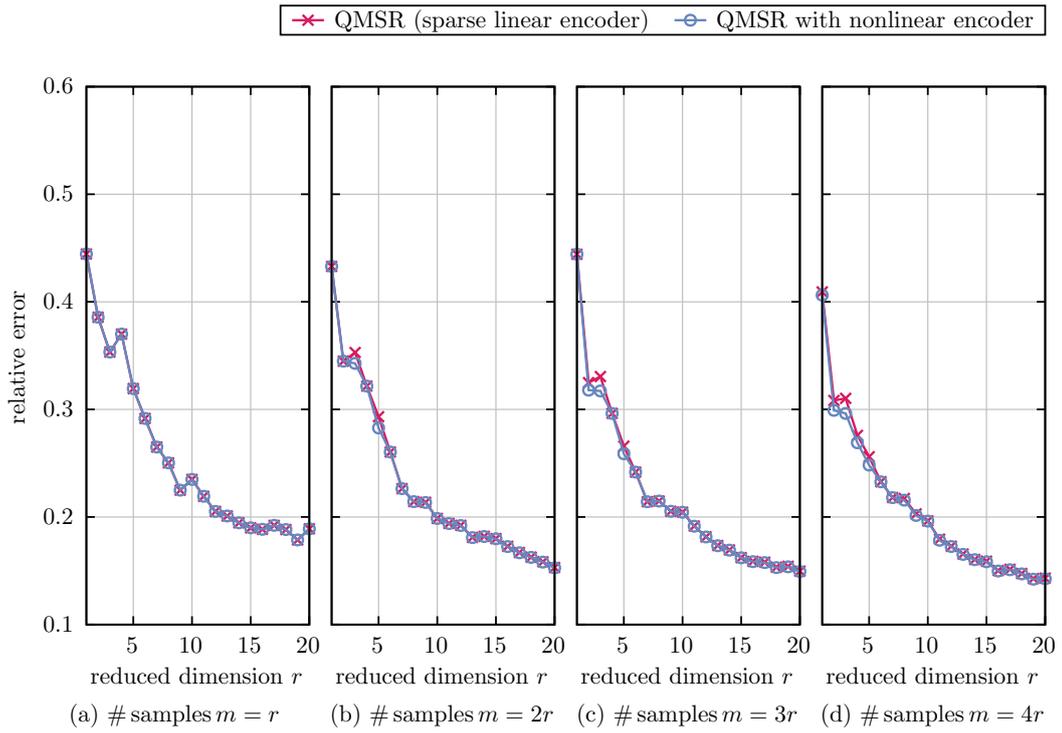}}
    \caption{Rotating detonation engines: The sparse linear encoder of QMSR achieves comparable accuracy as the nonlinear encoding obtained by solving the nonlinear least-squares problem \eqref{eq:ERQM:OptiProb} with the Gauss-Newton method in this example.}
    \label{fig:engineGN}
\end{figure}

\section{Conclusions}\label{sec:Conc}
QMSR combines nonlinear approximations with sparse regression to approximate data vectors from a small subset of their components. Through a judicious choice of the encoder function and training with the proposed sparse greedy method, the QMSR approximations carry over key features from empirical interpolation, such as the ability to exactly recover data that lie on the quadratic manifold from sparse samples. Numerical experiments across a wide range of examples demonstrate that QMSR approximations achieve accuracy comparable to reconstructing data points from all components on quadratic manifolds. Furthermore, QMSR leverages the greater expressivity of quadratic manifolds compared to linear approximation spaces, achieving orders of magnitude higher accuracy than empirical interpolation. 

\section*{Acknowledgments}
We thank Ryan Camacho and Cheng Huang for sharing their dataset from~\cite{CamachoH2024Investigations}.

\bibliographystyle{siamplain}
\bibliography{references,donotchange}

\end{document}